\newtheorem{theorem}{Theorem}[section]
\newtheorem{definition}{Definition}[section]
\newtheorem{assumption}{Assumption}[section]
\newtheorem{lemma}{Lemma}[section]
\newtheorem{remark}{Remark}[section]
\newenvironment{proof}[1][Proof]{\textbf{#1.} }{\ \rule{0.5em}{0.5em}}
\newcommand{\tr}{{\mathrm{tr}}}
\newcommand{\bm}[1]{\boldsymbol{#1}}
\newcommand{\mbfa}{\mathbf{A}}
\newcommand{\mbfg}{\mathbf{G}}
\newcommand{\mbfi}{\mathbf{I}}
\newcommand{\mbfu}{\mathbf{U}}
\newcommand{\mbfv}{\mathbf{V}}
\newcommand{\mbfw}{\mathbf{W}}
\newcommand{\mbfx}{\mathbf{X}}
\newcommand{\mbfy}{\mathbf{Y}}
\newcommand{\mbfz}{\mathbf{Z}}
\newcommand{\mbr}{\mathbb{R}}
\newcommand{\mck}{\mathcal{K}}
\newcommand{\mcl}{\mathcal{L}}
\newcommand{\mcs}{\mathcal{S}}
\newcommand{\bmcu}{\bm{\mathcal{U}}}
\newcommand{\bmcv}{\bm{\mathcal{V}}}
\newcommand{\bfx}{{\bf x}}
\newcommand{\bfb}{{\bf b}}
\newcommand{\bfw}{{\bf w}}
\DeclareMathOperator*{\argmin}{argmin}
\begin{document}

\title{\textbf{Sparse Bilinear Logistic Regression}}
\author{Jianing V. Shi$^{1,2}$\thanks{Corresponding author's email address: jianing@math.ucla.edu}, Yangyang Xu$^{3}$, and Richard G. Baraniuk$^{1}$ \\
\small{$^{1}$ Department of Electrical and Computer Engineering, Rice University}\\
\small{$^{2}$ Department of Mathematics, UCLA}\\
\small{$^{3}$ Department of Computational and Applied Mathematics, Rice University}
}
\date{\today}
\maketitle

\begin{abstract}
In this paper, we introduce the concept of sparse bilinear logistic regression for decision problems involving explanatory variables that are two-dimensional matrices.\ Such problems are common in computer vision, brain-computer interfaces, style/content factorization, and parallel factor analysis.\ The underlying optimization problem is bi-convex; we study its solution and develop an efficient algorithm based on block coordinate descent.\ We provide a theoretical guarantee for global convergence and estimate the asymptotical convergence rate using the Kurdyka-{\L}ojasiewicz inequality.\ A range of experiments with simulated and real data demonstrate that sparse bilinear logistic regression outperforms current techniques in several important applications. 
\end{abstract}

\section{Introduction}

Logistic regression~\cite{hosmer2000} has a long history in decision problems, which are ubiquitous in computer vision~\cite{bishop07}, bioinformatics~\cite{tsuruoka07}, gene classification~\cite{liao07}, and neural signal processing~\cite{parra05}.  Recently sparsity has been introduced into logistic regression to combat the curse of dimensionality in problems where only a subset of explanatory variables are informative~\cite{tibshirani96}.  The indices of the non-zero weights correspond to features that are informative about classification, therefore leading to feature selection.  Sparse logistic regression has many attractive properties, including robustness to noise and logarithmic sample complexity bounds~\cite{ng2004}.

In the classical form of logistic regression, the explanatory variables are treated as i.i.d.\ vectors.  However, in many real-world applications, the explanatory variables take the form of matrices.\ In image recognition tasks~\cite{lecun1998}, for example, each feature is an image.\ Visual recognition tasks for video data often use a feature-based representation, such as the scale-invariant feature transform (SIFT)~\cite{lowe1999} or histogram of oriented gradients (HOG)~\cite{dalal2005}, to construct features for each frame, resulting in histogram-time feature matrices.\ Brain-computer interfaces based on electroencephalography (EEG) make decisions about motor action~\cite{vidal1977} using channel-time matrices.

For these and other applications, bilinear logistic regression~\cite{dyrholm2007} extends logistic regression to explanatory variables that take two-dimensional matrix form.\ The resulting dimensionality reduction of the feature space in turn yields better generalization performance.\ In contrast to standard logistic regression, which collapses each feature matrix into a vector and learns a single weight vector, bilinear logistic regression learns weight factors along each dimension of the matrix to form the decision boundary.\ It has been shown that the unregularized bilinear logistic regression outperforms linear logistic regression in several applications, including brain-computer interfaces~\cite{dyrholm2007}.\ It has also been shown that in certain visual recognition tasks, a support vector machine (SVM) applied in the bilinear feature space outperforms an SVM applied in
the standard linear feature space as well as an SVM applied to a dimensionality-reduced feature space using principle component analysis (PCA)~\cite{pirsiavash2009}.

Bilinear logistic regression has also found application in style and content separation, which can improve the performance of object recognition tasks under various nuisance variables such as orientation, scale, and viewpoint~\cite{tenenbaum2000}.  Bilinear logistic regression identifies subspace projections that factor out informative features and nuisance variables, thus leading to better generalization performance.

Finally, bilinear logistic regression reveals the contributions of different dimensions to classification performance, similarly to parallel factor analysis~\cite{harshman1970}.  This leads to better interpretability of the resulting decision boundary.

In this paper, we introduce sparsity to the bilinear logistic regression model and demonstrate that it improves 
generalization performance in a range of classification problems.\ Our contributions are three-fold.\ First, we propose a sparse bilinear regression model that fuses the key ideas behind both sparse logistic regression and bilinear logistic regression.\ Second, we study the properties of the solution of the bilinear logistic regression problem.\ Third, we develop an efficient algorithm based on block coordinate descent for solving the sparse bilinear regression problem.\ Both the theoretical analysis and the numerical optimization are complicated by the bi-convex nature of the problem, since the solution may become stuck at a non-stationary point.  In contrast to the conventional block coordinate descent method, we solve each subproblem using the proximal method,
which significantly accelerates convergence.\ We also provide a theoretical guarantee for global convergence and estimate the asymptotical convergence rate using a result based on the Kurdyka-{\L}ojasiewicz inequality.

We demonstrate empirically that sparse bilinear logistic regression improves the generalization performance of the classifier under various tasks.  However, due to the non-convexity associated with the bilinear model, it remains a challenge to carry out rigorous statistical analysis using the minimax theory.

\section{Sparse bilinear logistic regression}

\subsection{Problem Definition}

We consider the following problem in this paper: Given $n$ sample-label pairs $\{(\mathbf{X}_i, y_i)\}_{i=1}^n$, where $\mbfx_i \in \mathbb{R}^{s \times t}$ is an explanatory variable in the form of a matrix and $y_i\in\{-1,+1\}$ is a categorical dependent variable, we seek a decision boundary to separate these samples.
  
\subsection{Prior Art}

\subsubsection{Logistic Regression}

The basic form of logistic regression~\cite{hosmer2000} transforms each explanatory variable from a matrix to a vector, $\mathbf{\bar{x}}_i = vec(\mbfx_i) \in \mathbb{R}^{p}$, where $p = st$.  One seeks a hyperplane, defined as $\{\bfx:\bfw^\top\bfx+b=0\}$, to separate these samples. For a new data sample $\mathbf{\bar{x}}_i$, its category can be predicted using a binomial model based on the margin $\bfw^\top\mathbf{\bar{x}}_i+b$. Figure~\ref{fig:lr} illustrates such an idea. 
\begin{figure}[h]
\begin{center}
\includegraphics[width=0.58\columnwidth]{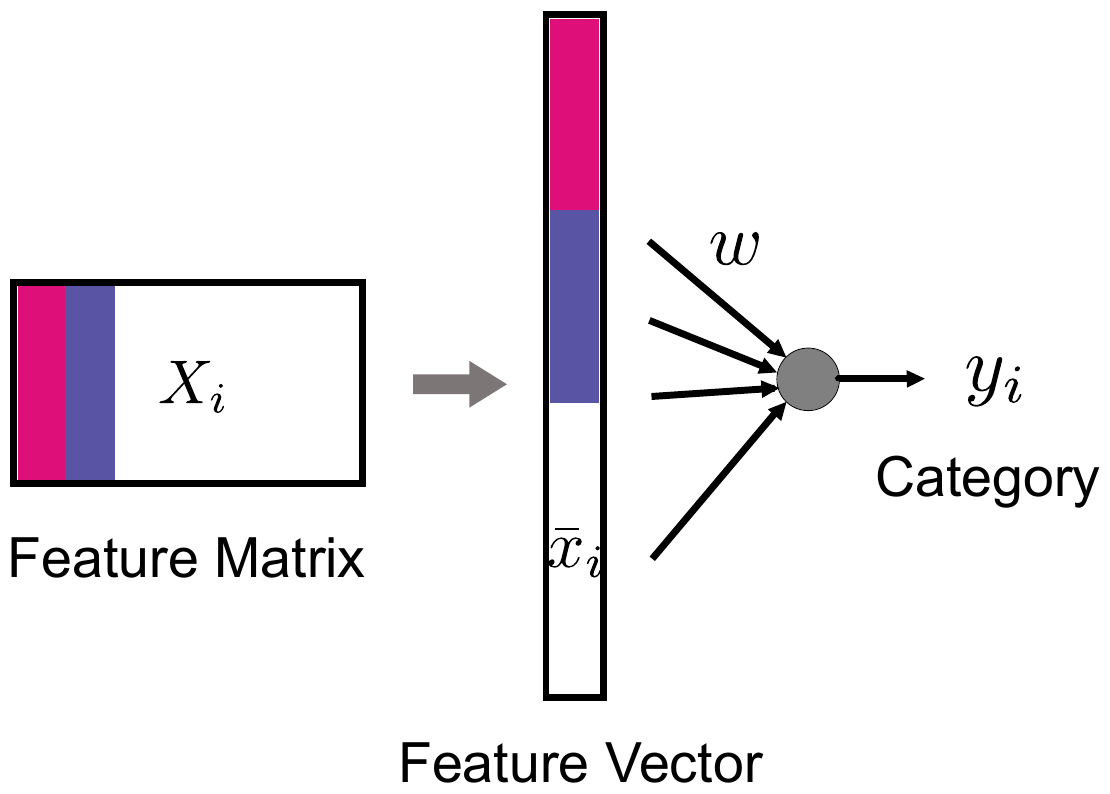}
\end{center}
\vskip -2ex
\caption{Illustration of logistic regression.}
\label{fig:lr}
\end{figure}

Essentially the logistic regression constructs a mapping from the feature vector $\bar{\mathbf{x}}_i$ to the label $y_i$,
\begin{equation*}
\Psi^{LR}: \bfw^{\top} \bar{\mathbf{x}}_i + b \mapsto y_i.
\end{equation*}
Assuming the samples of both classes are i.i.d., the conditional probability for classifier label $y_i$ based on sample $\bar{\mathbf{x}}_i$, according to the logistic model, takes the form of 
\begin{equation*}\label{eq:pr}
p(y_i| \bar{\mathbf{x}}_i,\bfw,b)=\frac{\exp[y_i(\bfw^\top \bar{\mathbf{x}}_i+b)]}{1+\exp[y_i(\bfw^\top \bar{\mathbf{x}}_i+b)]},\quad i = 1,\cdots,n.
\end{equation*}
To perform the maximum likelihood estimation (MLE) of $\bfw$ and $b$, one can minimize the empirical loss function
\begin{equation}\label{eq:ml}
\ell(\bfw,b) = \frac{1}{n}\sum_{i=1}^n \log \left( 1+\exp[-y_i(\bfw^\top \bar{\mathbf{x}}_i+b)] \right).
\end{equation}

\subsubsection{Sparse Logistic Regression}

Sparse logistic regression assumes that only a subset of the decision variables are informative about the classification~\cite{tibshirani96}.  Typically one assumes a sparsity promoting prior on $\bfw$ using the Laplacian prior.  The maximum a posteriori (MAP) estimate for sparse logistic regression can be reduced to an $\ell_1$ minimization problem
\begin{equation}\label{eq:map}
\min_{\bfw,b} \ell(\bfw,b) + \lambda \|\bfw\|_1,
\end{equation}
where $\lambda$ is a regularization parameter.  

In the realm of machine learning, $\ell_1$ regularization exists in various forms of classifiers, including $\ell_1$-regularized logistic regression \cite{tibshirani96}, $\ell_1$-regularized probit regression \cite{figueiredo01,figueiredo03}, $\ell_1$-regularized support vector machines \cite{zhu04}, and $\ell_1$-regularized multinomial logistic regression \cite{krishnapuram05}. 

The $\ell_1$-regularized logistic regression problem is convex but non-differentiable.  There has been very active development on efficient numeric algorithms, including LASSO \cite{tibshirani96}, Gl1ce \cite{lokhorst99}, Grafting \cite{perkins03}, GenLASSO \cite{roth04}, SCGIS \cite{goodman04}, IRLS-LARS \cite{efron04,lee06}, BBR \cite{eyheramendy03,madign05,genkin07}, MOSEK \cite{boyd07}, SMLR \cite{krishnapuram05}, interior-point method \cite{koh07}, FISTA \cite{beck2009}, and HIS \cite{shi10}. 

\subsubsection{Bilinear Logistic Regression}

Bilinear logistic regression was proposed in \cite{dyrholm2007}.  A key insight of bilinear logistic regression is to preserve the matrix structure of the explanatory variables.  The decision boundary is constructed using a weight matrix $\mbfw$, which is further factorized into $\mbfw=\mbfu\mbfv^\top$ with two factors $\mbfu\in\mbr^{s\times r}$ and $\mbfv\in\mbr^{r \times t}$. Figure~\ref{fig:blr} illustrates the concept of bilinear logistic regression.
\vskip 2ex
\begin{figure}[h]
\begin{center}
\includegraphics[width=0.58\columnwidth]{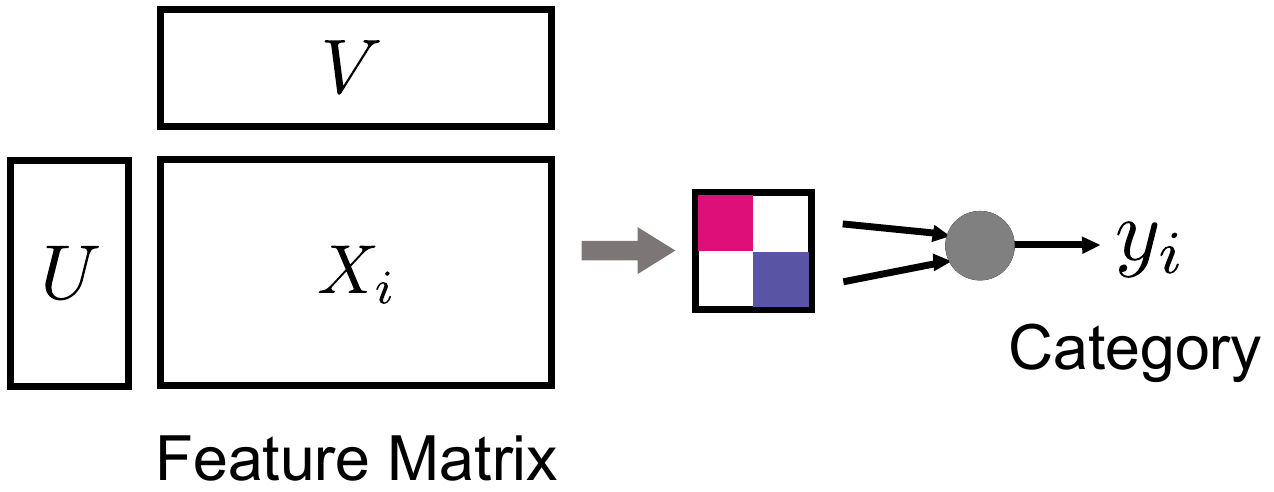}
\end{center}
\vskip -2ex
\caption{Illustration of bilinear logistic regression.}
\label{fig:blr}
\end{figure}
\vskip 1ex
Bilinear logistic regression constructs a new mapping from the feature matrix $\mbfx_i$ to the label $y_i$,
\begin{equation*}
\Psi^{BLR}: \tr(\mbfu^{\top} \mbfx_i \mbfv) + b \mapsto y_i,
\end{equation*}
where $\tr(\mbfa)=\sum_{i}a_{ii}$ for a square matrix $\mbfa$.
Under these settings, the empirical loss function in \eqref{eq:ml} becomes
\begin{equation}\label{eq:bml}
\ell(\mbfu,\mbfv,b)= \frac{1}{n}\sum_{i=1}^n\log \left( 1+\exp[-y_i (\tr(\mbfu^\top\mbfx_i\mbfv)+b)] \right).
\end{equation}

The model \eqref{eq:bml} essentially identifies subspace projections that are maximally informative about classification.  The variational problem generates a low-rank weight matrix $\mbfw \in \mathbb{R}^{s \times t}$ that can be factorized into $\mbfw \in \mathbb{R}^{s \times r}$ and $\mbfv \in \mathbb{R}^{r \times t}$.  One can interpret the mapping from the feature matrix to the label in the following equivalent form,
\begin{equation*}
\Psi^{BLR}: \tr(\mbfw \otimes \mbfx_i) + b \mapsto y_i, \quad \mbfw = \mbfu \mbfv^{\top}, 
\end{equation*}
where $\tr(\mbfw \otimes \mbfx_i) = \sum_{j,k} (\mbfw)_{jk} (\mbfx_i)_{jk}$.

\subsection{Our New Model}

\subsubsection{Sparse Bilinear Logistic Regression}

We introduce sparsity promoting priors on $\mbfu$ and $\mbfv$ and derive the so-called \emph{sparse bilinear logistic regression}.  The corresponding variational problem can be obtained using the MAP estimate,
\begin{equation}\label{eq:main}
\min_{\mbfu,\mbfv,b} \ell(\mbfu,\mbfv,b)+
r_1(\mbfu)+r_2(\mbfv),
\end{equation}
where $r_1$ and $r_2$ are assumed to be convex functions incorporating the priors to promote structures on $\mbfu$ and $\mbfv$, respectively.  Plugging the empirical loss function for bilinear logistic regression, the objective function of sparse bilinear logistic regression becomes
\begin{equation}
\label{eq:sblr}
\min_{\mbfu,\mbfv,b} \frac{1}{n}\sum_{i=1}^n\log \left( 1+\exp[-y_i (\tr(\mbfu^\top\mbfx_i\mbfv)+b)] \right)+r_1(\mbfu)+r_2(\mbfv).
\end{equation}

As for the sparsity promoting priors, in this paper we focus on elastic net regularization~\cite{zhu05} of the form
\begin{subequations}\label{eq:reg}
\begin{align}
r_1(\mbfu)=&\hspace{1mm} \mu_1\|\mbfu\|_1+\frac{\mu_2}{2}\|\mbfu\|_F^2,\label{eq:reg-u}\\
r_2(\mbfv)=&\hspace{1mm} \nu_1\|\mbfv\|_1+\frac{\nu_2}{2}\|\mbfv\|_F^2,\label{eq:reg-v}
\end{align}
\end{subequations}
where $\|\mbfu\|_1\triangleq\sum_{i,j}|u_{ij}|$. Depending on the application, other regularizers can be used. For example, one can use the total variation regularization, which we plan to explore in future work.  

\subsubsection{Why Sparsity?}

The reasons for introducing sparsity promoting priors into bilinear logistic regression are three-fold.

First, according to \cite{dyrholm2007}, one limitation of bilinear logistic regression is the notorious ambiguity in the estimates.  More specifically, the estimated $\hat{\mbfu}$ and $\hat{\mbfv}$ are subject to an arbitrary linear column space transformation
\begin{equation}
\tr(\hat{\mbfu}^{\top} \mbfx_i \hat{\mbfv}) = \tr(\mbfg^{-1}\mbfg \hat{\mbfu}^{\top} \mbfx_i \hat{\mbfv})
= \tr\big( (\hat{\mbfu} \mbfg^{\top})^{\top} \mbfx_i (\hat{\mbfv}{\mbfg}^{-1}) \big),
\end{equation}
where $\mbfg \in \mathbb{R}^{r \times r}$ is an arbitrary full-rank matrix.  Thus the solution to bilinear logistic regression is not unique.  One can overcome such an ambiguity by introducing sparsity promoting priors on weight factors.

Second, bilinear logistic regression was originally motivated by analyzing neuroimaging data~\cite{dyrholm2007}.  The resulting weight factors $\hat{\mbfu}$, $\hat{\mbfv}$ reveal spatial and temporal contributions of neural signal, with respect to certain classification tasks.  Typically the neural sources generating these factors are localized spatially and temporally.  Sparsity leads to feature selection, since the non-zero elements in the weight factors correspond to informative features.  Therefore, it is a reasonable assumption to impose sparsity promoting priors, which can improve the interpretation of the resulting factors.

Third, sparsity improves the generalization performance of the classifier, due to the its robustness to noise and logarithmic sample complexity bounds~\cite{ng2004}.  Even though the statistical analysis based on covering numbers~\cite{ng2004} concerns linear logistic regression models, we envision that such an intuition should generalize to the bilinear model.  We show empirically below that sparsity improves the generalization performance of the classifier in a range of numerical experiments.

\section{Numerical Algorithm to Solve \eqref{eq:main}}

\subsection{Block Coordinate Descent}

We propose an efficient numerical algorithm to solve for the variational problem \eqref{eq:main}.  It is based on the  \emph{block coordinate descent} method, which iteratively updates $(\mbfu,b)$ with $\mbfv$ fixed and then $(\mbfv,b)$ with $\mbfu$ fixed.  The original flavor of block coordinate descent, see \cite{LuoTseng93, Tseng01} and the references therein, alternates between the following two subproblems:
\begin{subequations}\label{eq:bcd}
\begin{align}
(\mbfu^k,\hat{b}^k) &= \argmin_{(\mbfu,b)}\ell(\mbfu,\mbfv^{k-1},b)+r_1(\mbfu), \label{eq:bcd-u}\\
(\mbfv^k,b^k) &= \argmin_{(\mbfv,b)}\ell(\mbfu^k,\mbfv,\hat{b}^k)+r_2(\mbfv) \label{eq:bcd-v}.
\end{align}
\end{subequations} 
The pseudocode for block coordinate descent is summarized in Algorithm~\ref{alg:bcd}.
\begin{algorithm}
\caption{Block Coordinate Descent} \label{alg:bcd}
\begin{algorithmic}
\STATE \textbf{Input:} $\{\mbfx_i,y_i\}_{i=1}^n$
\STATE \textbf{Initialization:} Choose $(\mbfu^{0},\mbfv^{0},b^{0})$
\WHILE{convergence criterion not met}
\STATE Compute $(\mbfu^k,\hat{b}^{k})$ by solving \eqref{eq:bcd-u} 
\STATE Compute $(\mbfv^k,b^{k})$ by solving \eqref{eq:bcd-v} 
\STATE Let $k=k+1$
\ENDWHILE
\end{algorithmic}
\end{algorithm}

Note that even though various optimization methods exist to solve each block, due to the nonlinear form of the empirical loss function $\ell(\cdot)$, solving each block accurately can be computationally expensive.

\subsection{Block Coordinate Proximal Descent}

In order to accelerate computation, we solve each block using the proximal method.  We call the resulting approach the \emph{block coordinate proximal descent} method.  Specifically, at iteration $k$, we perform the following updates:
\begin{subequations}\label{eq:update}
\begin{align}
\mbfu^k=&\argmin_{\mbfu}\langle\nabla_{\mbfu}\ell(\mbfu^{k-1},\mbfv^{k-1},b^{k-1}),\mbfu-\mbfu^{k-1}\rangle
		+\frac{L_u^k}{2}\|\mbfu-\mbfu^{k-1}\|^2_F+r_1(\mbfu),\label{eq:update-u}\\
\hat{b}^{k}=&\argmin_b\langle\nabla_b\ell(\mbfu^{k-1},\mbfv^{k-1},b^{k-1}),b-b^{k-1}\rangle
		+\frac{L_u^k}{2}(b-b^{k-1})^2,\label{eq:update-b1}\\
\mbfv^k=&\argmin_{\mbfv}\langle\nabla_{\mbfv}\ell(\mbfu^{k},\mbfv^{k-1},\hat{b}^{k}),\mbfv-\mbfv^{k-1}\rangle
		+\frac{L_v^k}{2}\|\mbfv-\mbfv^{k-1}\|^2_F+r_2(\mbfv),\label{eq:update-v}\\
b^{k}=&\argmin_b\langle\nabla_b\ell(\mbfu^{k},\mbfv^{k-1},\hat{b}^{k}),b-\hat{b}^{k}\rangle
		+\frac{L_v^k}{2}(b-\hat{b}^{k})^2, \label{eq:update-b2}
\end{align}
\end{subequations}
where $L_u^k$ and $L_v^k$ are stepsize parameters to be specified in Section~\ref{sec:stepsize}. Note that we have decoupled $(\mbfu,b)$-subproblem to \eqref{eq:update-u} and \eqref{eq:update-b1} since the updates of $\mbfu$ and $b$ are independent. Similarly, $(\mbfv,b)$-subproblem has been decoupled to \eqref{eq:update-v} and \eqref{eq:update-b2}.

Denote the objective function of \eqref{eq:main} as $$F(\mbfu,\mbfv,b) \triangleq \ell(\mbfu,\mbfv,b)+r_1(\mbfu)+r_2(\mbfv).$$ Let $F^k \triangleq F(\mbfu^k,\mbfv^k,b^k)$ and $\mbfw^k\triangleq (\mbfu^k,\mbfv^k,b^k)$. We define \emph{convergence criterion} as $q^k \le \epsilon$, where
\begin{equation}\label{eq:relerr}
q^k\triangleq\max \Big\{ \frac{\|\mbfw^k-\mbfw^{k-1}\|_F}{1+\|\mbfw^{k-1}\|_F}, \frac{ | F^k-F^{k-1}| }{1+F^{k-1}} \Big\}
\end{equation}
and $\|\mbfw\|_F^2\triangleq\|\mbfu\|_F^2+\|\mbfv\|_F^2+|b|^2$.

The pseudocode for block coordinate proximal descent is summarized in Algorithm~\ref{alg:bcpd}. 
\begin{algorithm}
\caption{Block Coordinate Proximal Descent} \label{alg:bcpd}
\begin{algorithmic}
\STATE \textbf{Input:} $\{\mbfx_i,y_i\}_{i=1}^n$
\STATE \textbf{Initialization:} Choose $(\mbfu^{0},\mbfv^{0},b^{0})$
\WHILE{convergence criterion not met}
\STATE Compute $(\mbfu^k,\hat{b}^{k})$ by \eqref{eq:update-u} and \eqref{eq:update-b1}
\STATE Compute $(\mbfv^k,b^{k})$ by \eqref{eq:update-v} and \eqref{eq:update-b2}
\STATE Let $k = k+1$
\ENDWHILE
\end{algorithmic}
\end{algorithm}

\subsection{Solving the Subproblems}\label{sec:solve-sub} 

The $b$-subproblems \eqref{eq:update-b1} and \eqref{eq:update-b2} can be simply solved using gradient descent, which can be reduced to
\begin{subequations}\label{eq:sol_b}
\begin{align}
\hat{b}^k  = & \hspace{1mm} {b}^{k-1}-\frac{1}{L_u^k}\nabla_{b}\ell(\mbfu^{k-1},\mbfv^{k-1},{b}^{k-1}),\\ 
{b}^k  = & \hspace{1mm} \hat{b}^{k}-\frac{1}{L_v^k}\nabla_{b}\ell(\mbfu^{k},\mbfv^{k-1},\hat{b}^{k}).
\end{align}
\end{subequations} 

The $\mbfu$-subproblem \eqref{eq:update-u} and $\mbfv$-subproblem \eqref{eq:update-v} are both strongly convex and can be solved by various convex programming solvers.  Since the dimension of input data can be large, it is important to solve the subproblems very efficiently.  The beauty of using the proximal method is its admission for closed-form solutions.  More specifically, for elastic net regularization terms $r_1$ and $r_2$ defined as \eqref{eq:reg}, both \eqref{eq:update-u} and \eqref{eq:update-v} admits closed form solutions
\begin{subequations}\label{eq:sol-uv}
\begin{align}
\mbfu^k=&\mcs_{\tau_u}\left(\frac{L_u^k {\mbfu}^{k-1}-\nabla_{\mbfu}\ell(\mbfu^{k-1},\mbfv^{k-1},{b}^{k-1})}{L_u^k+\mu_2}\right),\\
\mbfv^k=&\mcs_{\tau_v}\left(\frac{L_v^k {\mbfv}^{k-1}-\nabla_{\mbfv}\ell(\mbfu^k,\mbfv^{k-1},\hat{b}^{k})}{L_v^k+\nu_2}\right),
\end{align}
\end{subequations}
where $\tau_u=\frac{\mu_1}{L_u^k+\mu_2}$, $\tau_v=\frac{\nu_1}{L_v^k+\nu_2}$, and $\mcs_{\tau}(\cdot)$ is the component-wise shrinkage defined as
$$\big(\mcs_\tau(\mbfz)\big)_{ij}=\left\{\begin{array}{ll}z_{ij}-\tau,&\text{ if }z_{ij}>\tau;\\
z_{ij}+\tau,&\text{ if }z_{ij}<-\tau;\\
0,&\text{ if }|z_{ij}|\le\tau.\end{array}\right.$$
The proximal method leads to closed-form solution for each subproblem, and the entire algorithm only involves matrix-vector multiplication and component-wise shrinkage operator.  Therefore our numerical algorithm will be computationally efficient. We will corroborate this statement using numerical experiments.



\subsection{Selection of $L_u^k$ and $L_v^k$}\label{sec:stepsize} 

To ensure the sequence generated by Algorithm~\ref{alg:bcpd} attains sufficient decrease in the objective function, $L_u^k$ is typically chosen as a Lipschitz constant of $\nabla_{(\mbfu,b)}\ell(\mbfu,\mbfv^{k-1},b)$ with respect to $(\mbfu,b)$.  More precisely, for all $(\mbfu,b)$ and $(\tilde{\mbfu},\tilde{b})$, we have
\begin{equation}
\|\nabla_{(\mbfu,b)}\ell(\mbfu,\mbfv^{k-1},b)-\nabla_{(\mbfu,b)}\ell(\tilde{\mbfu},\mbfv^{k-1},\tilde{b})\|_F
\le L_u^k\|(\mbfu,b)-(\tilde{\mbfu},\tilde{b})\|_F,
\end{equation}
where $\|(\mbfu,b)\|_F:=\sqrt{\|\mbfu\|_F^2+b^2}$. Similarly, $L_v^k$ can be chosen as a Lipschitz constant of $\nabla_{(\mbfv,b)}\ell(\mbfu^k,\mbfv,b)$ with respect to $(\mbfv,b)$. The next lemma shows that the two partial gradients $\nabla_{(\mbfu,b)}\ell(\mbfu,\mbfv,b)$ and $\nabla_{(\mbfv,b)}\ell(\mbfu,\mbfv,b)$ are Lipschitz continuous with constants dependent on $\mbfu$ and $\mbfv$ respectively. 

\begin{lemma}\label{lem:lip}
The partial gradients $\nabla_{(\mbfu,b)}\ell(\mbfu,\mbfv,b)$ and $\nabla_{(\mbfv,b)}\ell(\mbfu,\mbfv,b)$ are Lipschitz continuous with constants
\begin{subequations}\label{eq:lip}
\begin{align}
L_{u}&=\frac{\sqrt{2}}{n}\sum_{i=1}^n\big(\|\mbfx_i\mbfv\|_F+1\big)^2,\label{eq:lip-u}\\
L_{v}&=\frac{\sqrt{2}}{n}\sum_{i=1}^n\big(\|\mbfx_i^\top\mbfu\|_F+1\big)^2,\label{eq:lip-v}
\end{align}
\end{subequations}
\end{lemma}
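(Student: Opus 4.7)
The plan is to reduce the lemma to a straightforward Lipschitz estimate for the sigmoid composed with an affine map. First I would observe that, since the $i$-th summand of $n\ell(\mbfu,\mbfv,b)$ depends on $(\mbfu,b)$ only through the affine function $f_i(\mbfu,b) := \tr(\mbfu^\top\mbfx_i\mbfv)+b$, the chain rule packages the $(\mbfu,b)$-partial gradient as
\begin{equation*}
\nabla_{(\mbfu,b)}\ell(\mbfu,\mbfv,b) = -\frac{1}{n}\sum_{i=1}^n y_i\,\sigma(-y_i f_i)\,\big(\mbfx_i\mbfv,\,1\big),
\end{equation*}
where $\sigma(t) := 1/(1+e^{-t})$ is the logistic sigmoid and $(\mbfx_i\mbfv,1)$ denotes the stacked $(\mbfu,b)$-gradient of $f_i$, lying in the space equipped with the norm $\|(\mbfu,b)\|_F$ introduced earlier in the excerpt.

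Next I would invoke two elementary ingredients: (a) $\sigma$ is $\tfrac14$-Lipschitz, since $|\sigma'(t)| = \sigma(t)(1-\sigma(t))\le\tfrac14$; and (b) Cauchy--Schwarz applied to the affine map $(\mbfu,b)\mapsto f_i$ yields
\begin{equation*}
|f_i(\mbfu,b) - f_i(\tilde\mbfu,\tilde b)| \le \|\mbfx_i\mbfv\|_F\,\|\mbfu-\tilde\mbfu\|_F + |b-\tilde b| \le \sqrt{\|\mbfx_i\mbfv\|_F^2 + 1}\,\|(\mbfu,b) - (\tilde\mbfu,\tilde b)\|_F.
\end{equation*}
Combining (a), (b), and $y_i^2=1$, each scalar coefficient $\sigma(-y_i f_i)$ is Lipschitz in $(\mbfu,b)$ with constant $\tfrac14\sqrt{\|\mbfx_i\mbfv\|_F^2+1}$.

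Finally, I would apply the triangle inequality to the gradient difference and pull out the per-summand direction whose norm is $\|(\mbfx_i\mbfv,1)\|_F = \sqrt{\|\mbfx_i\mbfv\|_F^2+1}$, arriving at
\begin{equation*}
\|\nabla_{(\mbfu,b)}\ell(\mbfu,\mbfv,b) - \nabla_{(\mbfu,b)}\ell(\tilde\mbfu,\mbfv,\tilde b)\|_F \le \frac{1}{4n}\sum_{i=1}^n \big(\|\mbfx_i\mbfv\|_F^2+1\big)\,\|(\mbfu,b)-(\tilde\mbfu,\tilde b)\|_F,
\end{equation*}
and then using $\|\mbfx_i\mbfv\|_F^2+1 \le (\|\mbfx_i\mbfv\|_F+1)^2$ together with the trivial bound $\tfrac14\le\sqrt2$ to majorize the coefficient by $L_u$ as stated in \eqref{eq:lip-u}. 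The bound for $\nabla_{(\mbfv,b)}\ell$ is identical after swapping the roles of $\mbfu$ and $\mbfv$ and noting $\nabla_{\mbfv} f_i = \mbfx_i^\top\mbfu$. The only (mild) obstacle is the matrix-calculus identification $\partial_{\mbfu}\tr(\mbfu^\top\mbfx_i\mbfv) = \mbfx_i\mbfv$ and its $\mbfv$-analogue; the analytic content of the lemma is nothing more than the $\tfrac14$-Lipschitz property of the sigmoid.
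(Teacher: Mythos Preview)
Your proof is correct and follows essentially the same route as the paper's: compute the gradient via the chain rule, use Lipschitz continuity of the sigmoid on the scalar argument, and bound the affine increment. The only difference is bookkeeping: the paper uses the cruder estimate $|(1+e^s)^{-1}-(1+e^q)^{-1}|\le|s-q|$ (i.e., $1$-Lipschitz rather than your $\tfrac14$-Lipschitz) and applies Cauchy--Schwarz only at the very end to convert $\|\mbfu-\tilde\mbfu\|_F+|b-\tilde b|$ into $\sqrt{2}\,\|(\mbfu,b)-(\tilde\mbfu,\tilde b)\|_F$, which is where the $\sqrt{2}$ in $L_u$ comes from; you instead apply Cauchy--Schwarz earlier and reach the sharper constant $\tfrac{1}{4n}\sum_i(\|\mbfx_i\mbfv\|_F^2+1)$ before deliberately loosening to match the stated $L_u$. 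So your argument actually proves a constant smaller by a factor of $4\sqrt{2}$.
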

\begin{proof}
By straightforward calculation, we have
\begin{subequations}
\begin{align}
\nabla_{\mbfu}\ell(\mbfu,\mbfv,b)&=-\frac{1}{n}\sum_{i=1}^n\left(1+\exp\big[y_i\big(\tr(\mbfu^\top\mbfx_i\mbfv)+b\big)\big]\right)^{-1}y_i\mbfx_i\mbfv,\label{eq:grad_u}\\
\nabla_{\mbfv}\ell(\mbfu,\mbfv,b)&=-\frac{1}{n}\sum_{i=1}^n\left(1+\exp\big[y_i\big(\tr(\mbfu^\top\mbfx_i\mbfv)+b\big)\big]\right)^{-1}y_i\mbfx_i^\top\mbfu,\label{eq:grad_v}\\
\nabla_b\ell(\mbfu,\mbfv,b)&=-\frac{1}{n}\sum_{i=1}^n\left(1+\exp\big[y_i\big(\tr(\mbfu^\top\mbfx_i\mbfv)+b\big)\big]\right)^{-1}y_i.\label{eq:grad_b}
\end{align}
\end{subequations}
For any $(\mbfu,b)$ and $(\tilde{\mbfu},\tilde{b})$, we have
\begin{eqnarray*}
\lefteqn{\|\nabla_{(\mbfu,b)}\ell(\mbfu,\mbfv,b)-\nabla_{(\mbfu,b)}\ell(\tilde{\mbfu},\mbfv,\tilde{b})\|_F}\\
&\le&\frac{1}{n}\sum_{i=1}^n\left|\left(1+\exp\big[y_i\big(\tr(\mbfu^\top\mbfx_i\mbfv)+b\big)\big]\right)^{-1}
-\left(1+\exp\big[y_i\big(\tr(\tilde{\mbfu}^\top\mbfx_i\mbfv)+\tilde{b}\big)\big]\right)^{-1}\right|\\
&&\big(\|\mbfx_i\mbfv\|_F+1\big)\\
&\le&\frac{1}{n}\sum_{i=1}^n\left(\|\mbfu-\tilde{\mbfu}\|_F\|\mbfx_i\mbfv\|_F+|b-\tilde{b}|\right)\big(\|\mbfx_i\mbfv\|_F+1\big)\\
&\le&\frac{1}{n}\sum_{i=1}^n\big(\|\mbfx_i\mbfv\|_F+1\big)^2\left(\|\mbfu-\tilde{\mbfu}\|_F+|b-\tilde{b}|\right)\\
&\le&\frac{\sqrt{2}}{n}\sum_{i=1}^n\big(\|\mbfx_i\mbfv\|_F+1\big)^2\|(\mbfu,b)-(\tilde{\mbfu},\tilde{b})\|_F,
\end{eqnarray*}
where in the third inequality we have used the inequality 
$$|(1+e^s)^{-1}-(1+e^q)^{-1}|\le |s-q|$$
and the last inequality follows from 
$$\|\mbfu-\tilde{\mbfu}\|_F+|b-\tilde{b}|\le\sqrt{2}\|(\mbfu,b)-(\tilde{\mbfu},\tilde{b})\|_F$$
by the Cauchy-Schwarz inequality. This completes the proof of \eqref{eq:lip-u}, and \eqref{eq:lip-v} can be shown in the same way.
\end{proof}

However, $L_u^k$ and $L_v^k$ chosen in such a manner may be too large, slowing convergence.  Therefore we have chosen to use an alternative and efficient way to dynamically update them.  Specifically, we let 
\begin{equation}\label{dynLu}
L_u^k=\max(L_{\min}, L_u^{k-1}\eta^{n_u^k})
\end{equation}
where $L_{\min}>0$, $\eta>1$, and $n_u^k\ge -1$ is the smallest integer such that
\begin{eqnarray}\label{chooseLu}
\lefteqn{\ell(\mbfu^k,\mbfv^{k-1},\hat{b}^k)}\nonumber\\
&\le& \ell(\mbfu^{k-1},\mbfv^{k-1},b^{k-1})\nonumber\\
&&+\langle\nabla_{\mbfu}\ell(\mbfu^{k-1},\mbfv^{k-1},b^{k-1}),\mbfu^k-\mbfu^{k-1}\rangle 
  +\langle\nabla_b\ell(\mbfu^{k-1},\mbfv^{k-1},b^{k-1}),\hat{b}^k-b^{k-1}\rangle\nonumber\\
&&+\frac{L_u^k}{2}\|\mbfu^k-\mbfu^{k-1}\|^2_F+\frac{L_u^k}{2}(\hat{b}^k-b^{k-1})^2,
\end{eqnarray}
and let
\begin{equation}\label{dynLv}
L_v^k=\max(L_{\min},L_v^{k-1}\eta^{n_v^k}),
\end{equation}
where $n_v^k\ge -1$ is the smallest integer such that
\begin{eqnarray}\label{chooseLv}
\lefteqn{\ell(\mbfu^k,\mbfv^{k},b^k)}\nonumber\\
&\le&\hspace{1mm} \ell(\mbfu^{k},\mbfv^{k-1},\hat{b}^{k})\nonumber\\
&&+\langle\nabla_{\mbfv}\ell(\mbfu^{k},\mbfv^{k-1},\hat{b}^{k}),\mbfv^k-\mbfv^{k-1}\rangle
    +\langle\nabla_b\ell(\mbfu^{k},\mbfv^{k-1},\hat{b}^{k}),b^{k}-\hat{b}^k\rangle\nonumber\\
&&+ \frac{L_v^k}{2}\|\mbfv^k-\mbfv^{k-1}\|^2_F+\frac{L_v^k}{2}(b^{k}-\hat{b}^k)^2.
\end{eqnarray}


The inequalities \eqref{chooseLu} and \eqref{chooseLv} guarantee sufficient decrease of the objective and are required for convergence. If $L_u^k$ and $L_v^k$ are taken as Lipschitz constants of $\nabla_{(\mbfu,b)}\ell(\mbfu,\mbfv^{k-1},b)$ and $\nabla_{(\mbfv,b)}\ell(\mbfu^k,\mbfv,b)$, then the two inequalities must hold. In our dynamical updating rule, note that in \eqref{dynLu} and \eqref{dynLv}, we allow $n_u^k$ and $n_v^k$ to be negative, namely, $L_u^k$ and $L_v^k$ can be smaller than their previous values.  Moreover, $n_u^k$ and $n_v^k$ must be finite if the sequence $\{(\mbfu^k,\mbfv^k)\}$ is bounded, and thus the updates in \eqref{dynLu} and \eqref{dynLv} are well-defined.

\section{Convergence Analysis}

We now establish the global convergence of the block coordinate proximal descent algorithm for sparse bilinear logistic regression, as well as estimate its asymptotic convergence rate.  

Our analysis mainly follows \cite{xu2013}, which establishes global convergence of the \emph{cyclic} block coordinate proximal method assuming the Kurdyka-{\L}ojasiewicz inequality (see Definition \ref{def:kl} below). Since our algorithm updates $b$-block twice during each iteration, its convergence result cannot be obtained directly from \cite{xu2013}. The work \cite{TsengYun2009} also establishes global convergence results with rate estimation for the block coordinate proximal method. However, it assumes the so-called \emph{local Lipschitzian error bound}, which is not known to hold for our problem. Throughout our analysis, we make the following assumption.

\begin{assumption}\label{assump1}
We assume the objective function $F$ is lower bounded and the problem \eqref{eq:main} has at least one stationary point.  In addition, we assume the sequence $\{\mbfw^k\}$ is bounded.
\end{assumption}

\begin{remark}
According to \eqref{eq:lip}, $L_u^k,L_v^k$ must be bounded if $\{\mbfw^k\}$ is bounded. In addition, for the regularization terms, $r_1$ set by \eqref{eq:reg-u} and $r_2$ taken as \eqref{eq:reg-v}, then $F$ is lower bounded by \emph{zero}, and \eqref{eq:main} has at least one solution.
\end{remark}

\begin{theorem}[Subsequence Convergence]\label{thm:subconvg}
Under Assumption \ref{assump1},
let $\{\mbfw^k\}$ be the sequence generated from Algorithm \ref{alg:bcpd}. Then any limit point $\bar{\mbfw}$ of $\{\mbfw^k\}$ is a stationary point of \eqref{eq:main}.
\end{theorem}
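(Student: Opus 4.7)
The plan is to follow the classical three-step argument for non-convex block coordinate methods: (i) establish a sufficient-decrease inequality for $F$ at each outer iteration, (ii) deduce from boundedness below of $F$ together with $L_u^k,L_v^k \geq L_{\min}$ that $\mbfw^k-\mbfw^{k-1}\to 0$, and (iii) pass to the limit in the first-order conditions of the four proximal subproblems along a convergent subsequence of $\{\mbfw^k\}$ to identify any accumulation point as stationary for \eqref{eq:main}.

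For step (i), I would combine \eqref{chooseLu} with two consequences of the proximal updates. The $\mbfu$-subproblem \eqref{eq:update-u} is strongly convex in $\mbfu$ with modulus $L_u^k$, so testing its optimality at $\mbfu^{k-1}$ yields
\begin{equation*}
\langle\nabla_{\mbfu}\ell(\mbfu^{k-1},\mbfv^{k-1},b^{k-1}),\mbfu^k-\mbfu^{k-1}\rangle + L_u^k\|\mbfu^k-\mbfu^{k-1}\|_F^2 + r_1(\mbfu^k)-r_1(\mbfu^{k-1}) \leq 0,
\end{equation*}
while the exact quadratic minimization \eqref{eq:update-b1} gives $\langle\nabla_b\ell(\mbfu^{k-1},\mbfv^{k-1},b^{k-1}),\hat{b}^k-b^{k-1}\rangle = -L_u^k(\hat{b}^k-b^{k-1})^2$. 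Substituting both identities into \eqref{chooseLu} and cancelling produces
\begin{equation*}
F(\mbfu^k,\mbfv^{k-1},\hat{b}^k) \leq F^{k-1} - \tfrac{L_u^k}{2}\big(\|\mbfu^k-\mbfu^{k-1}\|_F^2+(\hat{b}^k-b^{k-1})^2\big),
\end{equation*}
and the symmetric manipulation with \eqref{chooseLv}, \eqref{eq:update-v}, and \eqref{eq:update-b2} gives
\begin{equation*}
F^k \leq F(\mbfu^k,\mbfv^{k-1},\hat{b}^k) - \tfrac{L_v^k}{2}\big(\|\mbfv^k-\mbfv^{k-1}\|_F^2+(b^k-\hat{b}^k)^2\big).
\end{equation*}
Telescoping, together with Assumption \ref{assump1} and $L_u^k,L_v^k\geq L_{\min}$, makes the sum of these four squared differences finite, so each of $\mbfu^k-\mbfu^{k-1}$, $\mbfv^k-\mbfv^{k-1}$, $\hat{b}^k-b^{k-1}$, and $b^k-\hat{b}^k$ tends to zero, completing step (ii).

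For step (iii), pick a subsequence with $\mbfw^{k_j}\to \bar{\mbfw}=(\bar{\mbfu},\bar{\mbfv},\bar{b})$; by (ii) the shifted quantities $\mbfu^{k_j-1},\mbfv^{k_j-1},b^{k_j-1}$, and $\hat{b}^{k_j}$ also converge to the corresponding components of $\bar{\mbfw}$. The first-order conditions for \eqref{eq:update-u}--\eqref{eq:update-b2} read
\begin{align*}
0 &\in \nabla_{\mbfu}\ell(\mbfu^{k-1},\mbfv^{k-1},b^{k-1}) + L_u^k(\mbfu^k-\mbfu^{k-1}) + \partial r_1(\mbfu^k),\\
0 &= \nabla_b\ell(\mbfu^{k-1},\mbfv^{k-1},b^{k-1}) + L_u^k(\hat{b}^k-b^{k-1}),\\
0 &\in \nabla_{\mbfv}\ell(\mbfu^k,\mbfv^{k-1},\hat{b}^k) + L_v^k(\mbfv^k-\mbfv^{k-1}) + \partial r_2(\mbfv^k),\\
0 &= \nabla_b\ell(\mbfu^k,\mbfv^{k-1},\hat{b}^k) + L_v^k(b^k-\hat{b}^k).
\end{align*}
Letting $k=k_j\to\infty$ and using continuity of $\nabla\ell$, outer semicontinuity of the convex subdifferentials $\partial r_1,\partial r_2$, and the uniform boundedness of $\{L_u^k\},\{L_v^k\}$ (discussed next), the correction terms $L_u^k(\mbfu^k-\mbfu^{k-1})$, $L_u^k(\hat{b}^k-b^{k-1})$, $L_v^k(\mbfv^k-\mbfv^{k-1})$, and $L_v^k(b^k-\hat{b}^k)$ all vanish, and we recover $0\in\nabla_{\mbfu}\ell(\bar{\mbfw})+\partial r_1(\bar{\mbfu})$, $0\in\nabla_{\mbfv}\ell(\bar{\mbfw})+\partial r_2(\bar{\mbfv})$, and $\nabla_b\ell(\bar{\mbfw})=0$, which are the stationarity conditions of \eqref{eq:main}.

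The only genuine technical point, already hinted at in the remark following Assumption \ref{assump1}, is the uniform boundedness of $\{L_u^k\}$ and $\{L_v^k\}$. Lemma \ref{lem:lip} supplies Lipschitz constants that depend continuously only on $\|\mbfv^{k-1}\|_F$ and $\|\mbfu^k\|_F$, which are bounded by Assumption \ref{assump1}; hence there exists $\bar{L}$ with $L_u(\mbfv^{k-1}),L_v(\mbfu^k)\leq \bar{L}$ for all $k$. Consequently the backtracking rules \eqref{dynLu}--\eqref{dynLv} terminate after a uniformly bounded number of expansions and $L_{\min}\leq L_u^k,L_v^k\leq \eta\bar{L}$ throughout, which simultaneously keeps the backtracking well-defined and licenses the limit passage performed in step (iii).
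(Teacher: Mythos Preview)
Your proof is correct and follows essentially the same three-step architecture as the paper: sufficient decrease, square-summability of the increments, and passage to the limit in the optimality conditions. The only cosmetic differences are that the paper obtains the sufficient-decrease inequalities by citing Lemma~2.3 of \cite{beck2009} rather than deriving them from \eqref{chooseLu}--\eqref{chooseLv} plus strong convexity, and that the paper passes to the limit in the variational (argmin) characterization of the subproblems after extracting a further subsequence along which $L_u^k,L_v^k$ converge, whereas you pass to the limit directly in the first-order inclusions using boundedness of $L_u^k,L_v^k$ and closedness of $\partial r_1,\partial r_2$. Both routes are equivalent; your explicit justification of the uniform bound $L_u^k,L_v^k\le \eta\bar L$ via Lemma~\ref{lem:lip} is in fact slightly more careful than the paper, which relegates this point to the remark following Assumption~\ref{assump1}.
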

\begin{proof}
From Lemma 2.3 of~\cite{beck2009}, we have
$$F(\mbfw^{k-1})-F(\mbfu^k,\hat{b}^k,\mbfv^{k-1})\ge\frac{L_u^k}{2}\big(\|\mbfu^{k-1}-\mbfu^k\|_F^2+|b^{k-1}-\hat{b}^k|^2\big),$$
and
$$F(\mbfu^k,\hat{b}^k,\mbfv^{k-1})-F(\mbfw^{k})\ge\frac{L_v^k}{2}\big(\|\mbfv^{k-1}-\mbfv^k\|_F^2+|\hat{b}^k-b^k|^2\big).$$
Assume $\min(L_u^k,L_v^k)\ge L_{\min}$ for all $k$.
Summing up the above two inequality gives
\begin{equation}\label{eq:app-diff}
F(\mbfw^{k-1})-F(\mbfw^k)\ge \frac{L_{\min}}{2}\big(\|\mbfu^{k-1}-\mbfu^k\|_F^2+\|\mbfv^{k-1}-\mbfv^k\|_F^2+|b^{k-1}-\hat{b}^k|^2+|\hat{b}^k-b^k|^2\big),
\end{equation}
which yields
$$F(\mbfw^0)-F(\mbfw^N)\ge\sum_{k=1}^N\big(\|\mbfu^{k-1}-\mbfu^k\|_F^2+\|\mbfv^{k-1}-\mbfv^k\|_F^2+|b^{k-1}-\hat{b}^k|^2+|\hat{b}^k-b^k|^2\big).$$
Letting $N\to\infty$ and observing $F\ge0$, we have
$$\sum_{k=1}^\infty\big(\|\mbfu^{k-1}-\mbfu^k\|_F^2+\|\mbfv^{k-1}-\mbfv^k\|_F^2+|b^{k-1}-\hat{b}^k|^2+|\hat{b}^k-b^k|^2\big)\le \infty.$$
Hence, $\mbfw^k-\mbfw^{k-1}\to\mathbf{0}$.

Let $\bar{\mbfw}$ be a limit point. Hence, there exists a subsequence $\{\mbfw^k\}_{k\in\mck}$ converging to $\bar{\mbfw}$. Passing to another subsequence, we can assume that $\{L_u^k\}_{k\in\mck}$ and $\{L_v^k\}_{k\in\mck}$ converge to $\bar{L}_u$ and $\bar{L}_v$ respectively. Note that $\{\mbfw^{k-1}\}_{k\in\mck}$ also converges to $\bar{\mbfw}$ and $\{\hat{b}^k\}_{k\in\mck}\to\bar{b}$. Letting $k\in\mck$ and $k\to\infty$ in  \eqref{eq:update-u}, we have
$$\bar{\mbfu}=\argmin_{\mbfu}\langle\nabla_{\mbfu}\ell(\bar{\mbfu},\bar{\mbfv},\bar{b}),\mbfu-\bar{\mbfu}\rangle+\frac{\bar{L}_u}{2}\|\mbfu-\bar{\mbfu}\|^2_F+r_1(\mbfu),$$
which implies
$\mathbf{0}\in\nabla_{\mbfu}\ell(\bar{\mbfu},\bar{\mbfv},\bar{b})+\partial r_1(\bar{\mbfu})$. Similarly, one can show $\mathbf{0}\in\nabla_{\mbfv}\ell(\bar{\mbfu},\bar{\mbfv},\bar{b})\\+\partial r_2(\bar{\mbfv})$ and $\nabla_{b}\ell(\bar{\mbfu},\bar{\mbfv},\bar{b})=0$. Hence, $\bar{\mbfw}$ is a critical point.
\end{proof}

In order to establish global convergence, we utilize Kurdyka-{\L}ojasiewicz inequality defined below~\cite{lojasiewicz1993geometrie, kurdyka1998gradients, bolte2007lojasiewicz}.  

\begin{definition}[Kurdyka-{\L}ojasiewicz Inequality]\label{def:kl}
A function $F$ is said to satisfy the Kurdyka-{\L}ojasiewicz inequality at point $\bar{\mbfw}$, if there exists $\theta\in[0,1)$ such that
\begin{equation}\label{eq:KL}\frac{|F(\mbfw)-F(\bar{\mbfw})|^\theta}{\text{dist}(\mathbf{0},\partial F(\mbfw))}\end{equation}
is bounded for any $\mbfw$ near $\bar{\mbfw}$, where $\partial F(\mbfw)$ is the limiting subdifferential~\cite{rockafellar1998variational} of $F$ at $\mbfw$, and $\text{dist}(\mathbf{0},\partial F(\mbfw)) \triangleq \min\{\|\mbfy\|_F: \mbfy\in\partial F(\mbfw)\}$.
\end{definition}

\begin{theorem}[Global Convergence]\label{thm:glbconvg}
\label{thm:convg}
Suppose Assumption \ref{assump1} holds and $F$ satisfies the Kurdyka-{\L}ojasiewicz inequality at a limit point $\bar{\mbfw}$ of $\{\mbfw^k\}$, then $\mbfw^k$ converges to $\bar{\mbfw}$. 
\end{theorem}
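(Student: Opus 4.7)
The plan is the standard Kurdyka-{\L}ojasiewicz convergence scheme, adapted to our double update of $b$. It rests on three ingredients which, when combined, force $\sum_k \|\mbfw^k - \mbfw^{k-1}\|_F < \infty$ and hence Cauchyness of $\{\mbfw^k\}$: a sufficient decrease inequality, a relative error bound of the form $\mathrm{dist}(\mathbf{0}, \partial F(\mbfw^k)) \le C \|\mbfw^k - \mbfw^{k-1}\|_F$, and the KL inequality at $\bar{\mbfw}$. The sufficient decrease \eqref{eq:app-diff} is already available from the proof of Theorem \ref{thm:subconvg}; combined with continuity of $F$ and the subsequence limit $\bar{\mbfw}$, it implies $F^k \downarrow F(\bar{\mbfw})$. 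The degenerate case $F^{k_0} = F(\bar{\mbfw})$ collapses the theorem trivially since sufficient decrease then pins $\mbfw^k$ to $\mbfw^{k_0}$ for $k \ge k_0$, so I would assume $F^k > F(\bar{\mbfw})$ for all $k$.

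For the relative error bound, I would read off the optimality conditions of the four subproblems \eqref{eq:update-u}--\eqref{eq:update-b2}. From \eqref{eq:update-u}, $-L_u^k(\mbfu^k - \mbfu^{k-1}) - \nabla_{\mbfu} \ell(\mbfw^{k-1}) \in \partial r_1(\mbfu^k)$; adding $\nabla_{\mbfu} \ell(\mbfw^k)$ places $\nabla_{\mbfu} \ell(\mbfw^k) - \nabla_{\mbfu} \ell(\mbfw^{k-1}) - L_u^k(\mbfu^k - \mbfu^{k-1})$ inside $\partial_{\mbfu} F(\mbfw^k)$, and Lemma \ref{lem:lip} together with boundedness of $\{\mbfw^k\}$ bounds its norm by $\mathcal{O}(\|\mbfw^k - \mbfw^{k-1}\|_F)$. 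Analogous manipulations handle the $\mbfv$- and $b$-blocks. The KL inequality at $\bar{\mbfw}$ then yields $(F^k - F(\bar{\mbfw}))^{\theta} \le C' \|\mbfw^k - \mbfw^{k-1}\|_F$ for $\mbfw^k$ near $\bar{\mbfw}$, and since the iterate differences shrink to zero by Theorem \ref{thm:subconvg}, the iterates eventually stay in such a neighborhood.

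With these three ingredients in hand, the concave envelope $\phi(s) = s^{1-\theta}/(1-\theta)$ converts sufficient decrease into $\phi(a_{k-1}) - \phi(a_k) \ge (a_{k-1} - a_k)/a_{k-1}^{\theta}$ for $a_k := F^k - F(\bar{\mbfw})$; substituting the KL and relative-error bounds and applying the AM-GM inequality $2\sqrt{xy} \le x + y$ produces a telescoping majorant for $\|\mbfw^k - \mbfw^{k-1}\|_F$, giving summability and hence convergence of $\{\mbfw^k\}$ to the limit point $\bar{\mbfw}$.

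The main obstacle is handling the double $b$-update cleanly in the relative-error step: the $b$-component of $\partial F(\mbfw^k)$ must be extracted from the optimality of \eqref{eq:update-b2}, namely $\nabla_b \ell(\mbfu^k, \mbfv^{k-1}, \hat b^k) + L_v^k(b^k - \hat b^k) = 0$, and passing to $\nabla_b \ell(\mbfw^k)$ introduces error terms $\mathcal{O}(\|\mbfv^k - \mbfv^{k-1}\|_F + |b^k - \hat b^k|)$. Both are controlled by sufficient decrease, as is the auxiliary quantity $|\hat b^k - b^{k-1}|$ arising from \eqref{eq:update-b1}, but this is precisely where our analysis departs from the cyclic scheme of \cite{xu2013}, and the bookkeeping linking $(\mbfw^{k-1}, \hat b^k, \mbfw^k)$ must be verified in detail.
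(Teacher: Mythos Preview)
Your approach is essentially the paper's: sufficient decrease \eqref{eq:app-diff}, a subgradient bound from the optimality conditions of \eqref{eq:update}, and the KL inequality combined via the desingularizing function $\phi(s)=s^{1-\theta}$ to produce a telescoping majorant and hence summability. You also correctly identify the nonstandard point, namely that the intermediate scalar $\hat{b}^k$ forces the relative-error bound to read $\mathrm{dist}(\mathbf{0},\partial F(\mbfw^k))\le C\big(\|\mbfw^k-\mbfw^{k-1}\|_F+|b^k-\hat{b}^k|\big)$ rather than $C\|\mbfw^k-\mbfw^{k-1}\|_F$, and that the extra term is controlled by \eqref{eq:app-diff}.

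There is, however, a real gap: your justification that ``the iterates eventually stay in such a neighborhood'' because $\|\mbfw^k-\mbfw^{k-1}\|_F\to 0$ is false as stated. Vanishing successive differences do not prevent the sequence from drifting between several limit points; this is precisely the phenomenon the whole argument is designed to rule out. The paper closes this gap with a trapping induction: one chooses $k_0$ so that $\mbfw^{k_0}$ is close to $\bar{\mbfw}$ \emph{and} the residual budget
\[
2\|\mbfw^{k_0}-\mbfw^{k_0+1}\|_F+\|\bar{\mbfw}-\mbfw^{k_0}\|_F+|b^{k_0+1}-\hat{b}^{k_0+1}|+\tfrac{1}{\tilde{C}^2}\phi\big(F(\mbfw^{k_0})-F(\bar{\mbfw})\big)\le\rho
\]
holds, then proves by induction that if $\mbfw^{k_0},\dots,\mbfw^N\in\mathbb{B}_\rho(\bar{\mbfw})$ then the telescoped KL estimate bounds $\sum_{k=k_0}^N\|\mbfw^k-\mbfw^{k+1}\|_F$ by this same budget, forcing $\mbfw^{N+1}\in\mathbb{B}_\rho(\bar{\mbfw})$ as well. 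Only after the induction is complete can you let $N\to\infty$ and conclude summability. Your outline needs this step; without it the argument is circular, since the KL inequality is only assumed to hold inside $\mathbb{B}_\rho(\bar{\mbfw})$.
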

\begin{proof}
The boundedness of $\{\mbfw^k\}$ implies that all intermediate points are bounded. Hence, there exists a constant $L_{\max}$ such that $L_u^k, L_v^k\le L_{\max}$ for all $k$, and also there is a constant $L_G$ such that for all $k$
\begin{subequations}\label{eq:app-lip}
\begin{align}
\|\nabla_\mbfu\ell(\mbfw^k)-\nabla_\mbfu\ell(\mbfw^{k-1})\|_F\le&L_G\|\mbfw^k-\mbfw^{k-1}\|_F,\\
\|\nabla_\mbfv\ell(\mbfw^k)-\nabla_\mbfv\ell(\mbfu^k,\mbfv^{k-1},\hat{b}^k)\|_F\le & L_G\|\mbfw^k-(\mbfu^k,\mbfv^{k-1},\hat{b}^k)\|_F,\\
\|\nabla_b\ell(\mbfw^k)-\nabla_b\ell(\mbfu^k,\mbfv^{k-1},\hat{b}^k)\|_F\le & L_G\|\mbfw^k-(\mbfu^k,\mbfv^{k-1},\hat{b}^k)\|_F.
\end{align}
\end{subequations}

Let $\bar{\mbfw}$ be a limit point of $\{\mbfw^k\}$ and assume $F$ satisfies KL-inequality within $\mathbb{B}_\rho(\bar{\mbfw})\triangleq\{\mbfw: \|\mbfw-\bar{\mbfw}\|_F\le\rho\}$, namely, there exists constants $0\le\theta<1$ and $C>0$ such that
\begin{equation}\label{eq:F-KL}
\frac{|F(\mbfw)-F(\bar{\mbfw})|^\theta}{\text{dist}(\mathbf{0},\partial F(\mbfw))}\le C, \quad \forall \mbfw\in\mathbb{B}_\rho(\bar{\mbfw}).
\end{equation}  Noting $\mbfw^k-\mbfw^{k-1}\to\mathbf{0}$, $|b^k-\hat{b}^k|\to 0$, and the continuity of $\phi(s)=s^{1-\theta}$, we can take sufficiently large $k_0$ such that  
\begin{equation}\label{eq:app-cond}
2\|\mbfw^{k_0}-\mbfw^{k_0+1}\|_F+\|\bar{\mbfw}-\mbfw^{k_0}\|_F+|b^{k_0+1}-\hat{b}^{k_0+1}|+\frac{1}{\tilde{C}^2}\phi(F(\mbfw^{k_0})-F(\bar{\mbfw}))\le\rho,
\end{equation}
where $\tilde{C}=\sqrt{\frac{(1-\theta)L_{\min}}{8C\cdot (3L_G+2L_{\max})}}$.
Without loss of generality, we assume $k_0=0$ (i.e., take $\mbfw^{k_0}$ as starting point), since the convergence of $\{\mbfw^k\}_{k\ge 0}$ is equivalent to that of $\{\mbfw^k\}_{k\ge k_0}$. In addition, we denote $F_k=F(\mbfw^k)-F(\bar{\mbfw})$ and note $F_k\ge 0$ from the non-increasing monotonicity of $\{F(\mbfw^k)\}$.

From \eqref{eq:update}, we have
\begin{subequations}\label{eq:app-pg}
\begin{align}
-\nabla_{\mbfu}\ell(\mbfw^{k-1})+\nabla_\mbfu\ell(\mbfw^k)-L_u^k(\mbfu^k-\mbfu^{k-1})&\in\partial r_1(\mbfu^k)+\nabla_\mbfu\ell(\mbfw^k),\\
-\nabla_{\mbfv}\ell(\mbfu^k,\mbfv^{k-1},\hat{b}^k)+\nabla_\mbfv\ell(\mbfw^k)-L_v^k(\mbfv^k-\mbfv^{k-1})&\in\partial r_2(\mbfv^k)+\nabla_\mbfv\ell(\mbfw^k),\\
-\nabla_b\ell(\mbfu^k,\mbfv^{k-1},\hat{b}^k)+\nabla_b\ell(\mbfw^k)-L_v^k(b^k-\hat{b}^k)&=\nabla_b\ell(\mbfw^k).
\end{align}
\end{subequations}
Hence,
\begin{eqnarray}\label{eq:KL2}
\lefteqn{\text{dist}(\mathbf{0},\partial F(\mbfw^k))}\nonumber\\
&\le&\|\nabla_\mbfu\ell(\mbfw^k)-\nabla_\mbfu\ell(\mbfw^{k-1})\|_F+L_u^k\|\mbfu^k-\mbfu^{k-1}\|_F+\|\nabla_\mbfv\ell(\mbfw^k)-\nabla_\mbfv\ell(\mbfu^k,\mbfv^{k-1},\hat{b}^k)\|_F\nonumber\\
&& +L_v^k\|\mbfv^k-\mbfv^{k-1}\|_F+\|\nabla_b\ell(\mbfw^k)-\nabla_b\ell(\mbfu^k,\mbfv^{k-1},\hat{b}^k)\|_F+L_v^k|b^k-\hat{b}^k|\nonumber\\
&\le& (3L_G+2L_{\max})\big(\|\mbfw^k-\mbfw^{k-1}\|_F+|b^k-\hat{b}^k|\big).
\end{eqnarray}
Note that \eqref{eq:app-diff} implies
$$F_k-F_{k+1}\ge \frac{L_{\min}}{4}\big(\|\mbfw^{k+1}-\mbfw^k\|_F^2+|b^{k+1}-\hat{b}^{k+1}|^2\big).$$

Assume $\mbfw^k\in\mathbb{B}_\rho(\bar{\mbfw})$ for $0\le k\le N$. We go to show $\mbfw^{N+1}\in\mathbb{B}_\rho(\bar{\mbfw})$. By the concavity of $\phi(s)=s^{1-\theta}$ and KL-inequality \eqref{eq:F-KL}, we have
\begin{equation}\label{eq:app-kl}\phi(F_k)-\phi(F_{k+1})\ge\phi'(F_k)(F_k-F_{k+1})\ge\frac{(1-\theta)L_{\min}\big(\|\mbfw^{k+1}-\mbfw^k\|_F^2+|b^{k+1}-\hat{b}^{k+1}|^2\big)}{4C\cdot (3L_G+2L_{\max})\big(\|\mbfw^k-\mbfw^{k-1}\|_F+|b^k-\hat{b}^k|\big)},
\end{equation}
which together with Cauchy-Schwarz inequality gives
\begin{equation}\label{eq:app-key}\tilde{C}\big(\|\mbfw^k-\mbfw^{k+1}\|_F+|b^{k+1}-\hat{b}^{k+1}|\big)\le\frac{\tilde{C}}{2}\big(\|\mbfw^{k-1}-\mbfw^k\|_F+|b^k-\hat{b}^k|\big)+\frac{1}{2\tilde{C}}\big(\phi(F_k)-\phi(F_{k+1})\big).
\end{equation}
Summing up the above inequality gives
\begin{equation}\label{eq:app-sum}\frac{\tilde{C}}{2}\sum_{k=1}^N\big(\|\mbfw^k-\mbfw^{k+1}\|_F+|b^{k+1}-\hat{b}^{k+1}|\big)\le \frac{\tilde{C}}{2}\big(\|\mbfw^0-\mbfw^{1}\|_F+|b^1-\hat{b}^1|\big)+\frac{1}{2\tilde{C}}\big(\phi(F_0)-\phi(F_{N+1})\big).
\end{equation}
Hence,
\begin{eqnarray}
\lefteqn{\|\mbfw^{N+1}-\bar{\mbfw}\|_F}\nonumber\\
&\le& \sum_{k=1}^N\|\mbfw^k-\mbfw^{k+1}\|_F+\|\mbfw^0-\mbfw^1\|_F+\|\bar{\mbfw}-\mbfw^0\|_F\nonumber\\
&\le& 2\|\mbfw^0-\mbfw^1\|_F+\|\bar{\mbfw}-\mbfw^0\|_F+|b^1-\hat{b}^1|+\frac{1}{\tilde{C}^2}\phi(F_0)
\le \rho,
\end{eqnarray}
where the last inequality is from \eqref{eq:app-cond}. Hence, $\mbfw^{N+1}\in\mathbb{B}_\rho(\bar{\mbfw})$, and by induction, $\mbfw^{k}\in\mathbb{B}_\rho(\bar{\mbfw})$ for all $k$.
Therefore, \eqref{eq:app-sum} holds for all $N$. Letting $N\to\infty$ in \eqref{eq:app-sum} yields
$$\sum_{k=1}^\infty\|\mbfw^k-\mbfw^{k+1}\|_F<\infty.$$
Therefore $\{\mbfw^k\}$ is a Cauchy sequence and thus converges to the limit point $\bar{\mbfw}$.
\end{proof}

\begin{remark}
Note that the logistic function $\ell$ is real analytic. If $r_1$ and $r_2$ are taken as in \eqref{eq:reg}, then they are semi-algebraic functions~\cite{bochnak1998}, and, according to~\cite{xu2013}, $F$ satisfies the Kurdyka-{\L}ojasiewicz inequality at every point.  
\end{remark}

\begin{theorem}[Convergence Rate]\label{thm:rate}
Depending on $\theta$ in \eqref{eq:KL}, we have the following convergence rates:
\begin{enumerate}
\item If $\theta=0$, then $\mbfw^k$ converges to $\bar{\mbfw}$ in finite iterations;
\item If $\theta\in(0,\frac{1}{2}]$, then $\mbfw^k$ converges to $\bar{\mbfw}$ at least linearly, i.e., $\|\mbfw^k-\bar{\mbfw}\|_F\le C\tau^k$ for some positive constants $C$ and $\tau<1$;
\item If $\theta\in(\frac{1}{2},1)$, then $\mbfw^k$ converges to $\bar{\mbfw}$ at least sublinearly. Specifically, $\|\mbfw^k-\bar{\mbfw}\|_F\le Ck^{-\frac{1-\theta}{2\theta-1}}$ for some constant $C>0$.
\end{enumerate}
\end{theorem}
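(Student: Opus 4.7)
The plan is to leverage the estimates already set up in the proof of Theorem \ref{thm:convg} and then convert the KL-inequality into a one-step recursion for the tail sum of step-lengths. Let $A_k\triangleq\|\mbfw^k-\mbfw^{k+1}\|_F+|b^{k+1}-\hat{b}^{k+1}|$ and $R_N\triangleq\sum_{j=N}^{\infty}A_j$. Since $\|\mbfw^N-\bar{\mbfw}\|_F\le\sum_{j=N}^{\infty}\|\mbfw^j-\mbfw^{j+1}\|_F\le R_N$, it suffices to bound $R_N$. Summing \eqref{eq:app-key} from $j=N$ to $M$, telescoping the $\phi$-terms, and sending $M\to\infty$ yields
\[
R_N\le A_{N-1}+\tfrac{1}{\tilde C^{2}}\,\phi\bigl(F_N\bigr).
\]
Combining the KL inequality \eqref{eq:F-KL} with the subgradient bound \eqref{eq:KL2} from the preceding proof gives $F_N^{\theta}\le C(3L_G+2L_{\max})\,A_{N-1}$, and hence $\phi(F_N)=F_N^{1-\theta}\le C_1\,A_{N-1}^{(1-\theta)/\theta}$ for a constant $C_1$ that depends only on $\theta,C,L_G,L_{\max}$. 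Plugging in we obtain the master estimate
\[
R_N\le A_{N-1}+C_2\,A_{N-1}^{(1-\theta)/\theta},\qquad A_{N-1}=R_{N-1}-R_N.
\]

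With this in hand I would split on $\theta$. \emph{Case $\theta=0$:} Here the KL inequality reads $1\le C\cdot\text{dist}(\mathbf{0},\partial F(\mbfw))$ whenever $F(\mbfw)\neq F(\bar{\mbfw})$. But \eqref{eq:KL2} combined with $\mbfw^k-\mbfw^{k-1}\to\mathbf{0}$ and $|b^k-\hat b^k|\to 0$ forces $\text{dist}(\mathbf{0},\partial F(\mbfw^k))\to 0$, a contradiction unless $F(\mbfw^k)=F(\bar{\mbfw})$ for all $k$ sufficiently large. The sufficient-decrease estimate \eqref{eq:app-diff} then forces $\mbfw^k=\mbfw^{k-1}$, so the iterates are eventually constant. \emph{Case $\theta\in(0,\tfrac12]$:} Here $(1-\theta)/\theta\ge 1$, so for $N$ large enough $A_{N-1}\le 1$ and the master estimate collapses to $R_N\le C_3\,A_{N-1}=C_3(R_{N-1}-R_N)$. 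Solving, $R_N\le \tau R_{N-1}$ with $\tau=C_3/(1+C_3)<1$, which yields the claimed geometric decay of $\|\mbfw^N-\bar{\mbfw}\|_F$.

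\emph{Case $\theta\in(\tfrac12,1)$:} Now $\alpha\triangleq(1-\theta)/\theta\in(0,1)$, the second term in the master estimate dominates, and we get $R_N\le C_4\,A_{N-1}^{\alpha}=C_4(R_{N-1}-R_N)^{\alpha}$, i.e.\
\[
R_N^{1/\alpha}\le C_5\,(R_{N-1}-R_N).
\]
From here I would apply the standard sublinear recursion lemma (e.g.\ Lemma 15 of Attouch--Bolte, or a direct induction): if a non-negative decreasing sequence $\{R_N\}$ satisfies $R_{N-1}-R_N\ge c\,R_N^{\beta}$ with $\beta=1/\alpha=\theta/(1-\theta)>1$, then $R_N\le C_6\,N^{-1/(\beta-1)}$. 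Since $1/(\beta-1)=(1-\theta)/(2\theta-1)$, this is exactly the claimed rate $\|\mbfw^N-\bar{\mbfw}\|_F\le CN^{-(1-\theta)/(2\theta-1)}$.

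The main obstacle I anticipate is the sublinear case: converting the nonlinear recurrence $R_{N-1}-R_N\ge c\,R_N^{\beta}$ into an explicit polynomial decay. The standard trick is to bound $R_N^{1-\beta}-R_{N-1}^{1-\beta}$ from below by comparing with the integral of $t\mapsto t^{-\beta}$ (using $\beta>1$ so that $1-\beta<0$), then telescoping to obtain $R_N^{1-\beta}\ge c'N$, and inverting. The other subtle points are verifying that the constants arising in the master estimate are independent of $N$ (which follows from the uniform bounds $L_u^k,L_v^k\le L_{\max}$ and Lipschitzness on the bounded set containing $\{\mbfw^k\}$, already established in the proof of Theorem \ref{thm:glbconvg}), and checking that the KL inequality applies at every iterate for $k$ large, which is immediate from $\mbfw^k\in\mathbb{B}_\rho(\bar{\mbfw})$ for all large $k$ by Theorem \ref{thm:glbconvg}.
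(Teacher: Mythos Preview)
Your proposal is correct and follows essentially the same route as the paper: define the tail sum (the paper calls it $S_N$, you call it $R_N$), sum the key inequality \eqref{eq:app-key} to obtain the master recursion $S_N\le (S_{N-1}-S_N)+\hat{C}(S_{N-1}-S_N)^{(1-\theta)/\theta}$, and then split on $\theta$ exactly as you do; for the sublinear case the paper simply cites Theorem~2 of \cite{attouch2009}, which is precisely the recursion lemma you spell out via the integral-comparison trick.
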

\begin{proof}
We estimate the convergence rates for different $\theta$ in \eqref{eq:F-KL}.

\textbf{Case 1: $\theta=0$}. We claim $\mbfw^k$ converges to $\bar{\mbfw}$ in finite iterations, i.e., there is $k_0$ such that $\mbfw^k=\bar{\mbfw}$ for all $k\ge k_0$. Otherwise, $F(\mbfw^k)>F(\bar{\mbfw})$ for all $k$ since if $F(\mbfw^{k_0})=F(\bar{\mbfw})$ then $\mbfw^k=\bar{\mbfw}$ for all $k\ge k_0$. By KL-inequality \eqref{eq:F-KL}, we have $C\cdot\text{dist}(\mathbf{0},\partial F(\mbfw^k))\ge 1$ for all $k$. However, \eqref{eq:app-pg} indicates $\text{dist}(\mathbf{0},\partial F(\mbfw^k))\to 0$ as $k\to\infty$. Therefore, if $\theta=0$, then $\mbfw^k$ converges to $\bar{\mbfw}$ in finite iterations.

\textbf{Case 2: $\theta\in(0,\frac{1}{2}]$.} Denote $S_N=\sum_{k=N}^\infty\big(\|\mbfw^k-\mbfw^{k+1}\|_F+|b^{k+1}-\hat{b}^{k+1}|\big)$. Note that \eqref{eq:app-key} holds for all $k$. Summing \eqref{eq:app-key} over $k$ gives
$S_N\le S_{N-1}-S_N+\frac{1}{2\tilde{C}^2}F_N^{1-\theta}.$ By \eqref{eq:F-KL} and \eqref{eq:KL2}, we have
$$F_N^{1-\theta}=(F_N^\theta)^{\frac{1-\theta}{\theta}}\le \big(C\cdot(3L_G+2L_{\max})\big)^{\frac{1-\theta}{\theta}}(S_{N-1}-S_N)^{\frac{1-\theta}{\theta}}.$$
Hence,
\begin{equation}\label{eq:app-key2}
S_N\le S_{N-1}-S_N+\hat{C}(S_{N-1}-S_N)^{\frac{1-\theta}{\theta}},
\end{equation}
where $\hat{C}=\frac{1}{2\tilde{C}^2}\big(C\cdot(3L_G+2L_{\max})\big)^{\frac{1-\theta}{\theta}}$. Note that $S_{N-1}-S_N\le 1$ as $N$ is sufficiently large, and also $\frac{1-\theta}{\theta}\ge 1$ when $\theta\in(0,\frac{1}{2}]$. Therefore, $(S_{N-1}-S_N)^{\frac{1-\theta}{\theta}}\le S_{N-1}-S_N$, and thus \eqref{eq:app-key2} implies $S_N\le (1+\hat{C})(S_{N-1}-S_N)$. Hence, $S_N\le\frac{1+\hat{C}}{2+\hat{C}}S_{N-1}\le\big(\frac{1+\hat{C}}{2+\hat{C}}\big)^N S_0$. Noting that $\|\mbfw^N-\bar{\mbfw}\|_F\le S_N$, we have
$$\|\mbfw^N-\bar{\mbfw}\|_F\le\big(\frac{1+\hat{C}}{2+\hat{C}}\big)^N S_0.$$

\textbf{Case 3: $\theta\in(\frac{1}{2},1)$.} Note $\frac{1-\theta}{\theta}<1$. Hence, $\eqref{eq:app-key2}$ implies that
$$S_N\le (1+\hat{C})(S_{N-1}-S_N)^{\frac{1-\theta}{\theta}}.$$ 
Through the same argument in the proof of Theorem 2 of~\cite{attouch2009}, we can show
$$S_N\le c\cdot N^{-\frac{1-\theta}{2\theta-1}},$$
for some constant $c$. This completes the proof.
\end{proof}

\begin{remark}
Note that the value of $\theta$ depends not only on $F$ but also on $\bar{\mbfw}$. The paper~\cite{xu2013} gives estimates for different classes of functions. Since the limit point is not known ahead, we cannot estimate $\theta$. However, our numerical results in Section \ref{sec:numerical} indicate that our algorithm converges asymptotically superlinearly and thus $\theta$ should be less than $\frac{1}{2}$ for our tests.
\end{remark}

\section{Numerical Results}\label{sec:numerical}

\subsection{Implementation}

Since the variational problem \eqref{eq:main} is non-convex, the starting point is significant for both the solution quality and convergence speed of our algorithms. Throughout our tests, we simply set $b^0=0$ and chose $(\mbfu^0, \mbfv^0)$ as follows.  

Let $\mbfx^{av}=\frac{1}{n}\sum_{i=1}^n\mbfx_i$. Then set $\mbfu^0$ to the negative of the first $r$ left singular vectors and $\mbfv^0$ to the first $r$ right singular vectors of $\mbfx^{av}$ corresponding to its first $r$ largest singular values. 

The intuition of choosing such $(\mbfu^0,\mbfv^0)$ is that it is one minimizer of $\frac{1}{n}\sum_{i=1}^n \tr(\mbfu^\top\mbfx_i\mbfv)$, which is exactly the first-order Taylor expansion of $\ell(\mbfu,\mbfv,0)$ at the origin, under constraints $\mbfu^\top\mbfu=\mbfi$ and $\mbfv^\top\mbfv=\mbfi$. Unless specified, the algorithms were terminated if they ran over 500 iterations or the relative error $q^k \le 10^{-3}$.

\subsection{Scalability}

In order to demonstrate the computational benefit of the proximal method, we compared Algorithm~\ref{alg:bcpd} with Algorithm~\ref{alg:bcd} on randomly generated data. Each data point\footnote{We use synthetic data simply for scalability and speed test.  For other numerical experiments, we use real-world datasets.} in class ``+1'' was generated by MATLAB command \verb|randn(s,t)+1| and each one in class ``-1'' by \verb|randn(s,t)-1|. The sample size was fixed to $n=100$, and the dimensions were kept by $s=t$ with $s$ varying among $\{50,100,250,500,750,1000\}$. We tested two sets of parameters for the scalability test. We ran each algorithm with one set of parameters for 5 times with different random data.  

Table \ref{table:scal} shows the average running time and the median number of iterations. From the table, we see that both Algorithm~\ref{alg:bcd} and Algorithm~\ref{alg:bcpd} are scalable to large-scale dataset and converge within the given tolerance after quite a few iterations. The per-iteration running time increases almost linearly with respect to the data size. In addition, Algorithm~\ref{alg:bcpd} is much faster than Algorithm~\ref{alg:bcd} in terms of running time.  Note the degree of speedup depends on the parameters. In the first experiment, where $\ell_2$ regularization dominates ($\mu_1 = \nu_1 = 0.1$, $\mu_2 = \nu_2 = 1$), Algorithm~\ref{alg:bcpd} is twice as fast as Algorithm~\ref{alg:bcd}.  In the second experiment, where $\ell_1$ regularization dominates ($\mu_1 = \nu_1 = 0.1$, $\mu_2 = \nu_2 = 0$), Algorithm~\ref{alg:bcpd} is about 20 times faster than Algorithm~\ref{alg:bcd}.

\begin{table}[t]
\caption{Scalability and comparison of Algorithms~\ref{alg:bcd} and~\ref{alg:bcpd}.  Shown are the average running time and median number of iterations.}
\vskip 1ex
\begin{center}
\resizebox{0.56\columnwidth}{!}{\begin{tabular}{|c|cc||cc|}\hline
& \multicolumn{2}{|c||}{Algorithm \ref{alg:bcd}} & \multicolumn{2}{|c|}{Algorithm \ref{alg:bcpd}}\\\hline
\multicolumn{5}{|c|}{$\mu_1 = \nu_1 = 0.1$, $\mu_2 = \nu_2 = 1$}\\\hline
$(s,t)$ & time (sec.) & iter & time (sec.) & iter\\\hline
$(50,50)$ & 0.79 & 5 & \bf 0.03 & 9 \\ 
$(100,100)$ & 1.13 & 6 & \bf 0.06 & 11 \\
$(250,250)$ & 3.89 & 6 & \bf 0.56 & 31 \\
$(500,500)$ & 9.96 & 5 & \bf 1.80 & 4 \\
$(750,750)$ & 18.60 & 7 & \bf 4.04 & 4 \\
$(1000,1000)$ & 16.25 & 3 & \bf 7.92 & 4 \\
\hline
\multicolumn{5}{|c|}{$\mu_1 = \nu_1 = 0.1$, $\mu_2 = \nu_2 = 0$}\\\hline
$(s,t)$ & time (sec.) & iter & time (sec.) & iter\\\hline
$(50,50)$ & 6.87 & 17 & \bf 0.37 & 282 \\ 
$(100,100)$ & 14.39 & 29 & \bf 0.38 & 47 \\
$(250,250)$ & 21.73 & 8 & \bf 3.49 & 28 \\
$(500,500)$ & 78.32 & 7 & \bf 4.07 & 11 \\
$(750,750)$ & 129.23 & 8 & \bf 4.31 & 4 \\
$(1000,1000)$ & 218.49 & 9 & \bf 8.19 & 4 \\ \hline
\end{tabular}}
\end{center}
\label{table:scal}
\end{table}

\subsection{Convergence Behavior}

We ran Algorithm~\ref{alg:bcpd} up to 600 iterations for the unregularized model ($\mu_1=\nu_1=\mu_2=\nu_2=0$), and $10^4$ iterations for the regularized model where we set $\mu_1=\nu_1=0.01$ and $\mu_2=\nu_2=0.5$. For both models, $r=1$ was used. The last iterate was used as $\mbfw^*$. The dataset is described in Section 6.1.1.  

Figure~\ref{fig:convg} shows the convergence behavior of Algorithm~\ref{alg:bcpd} for solving \eqref{eq:main} with different regularization terms. From the figure, we see that our algorithm converges pretty fast and the difference $\|\mbfw^k-\mbfw^*\|_F$ appears to decrease linearly at first and superlinearly eventually.

\begin{figure}[t]
\begin{center}
\includegraphics[width=0.58\columnwidth]{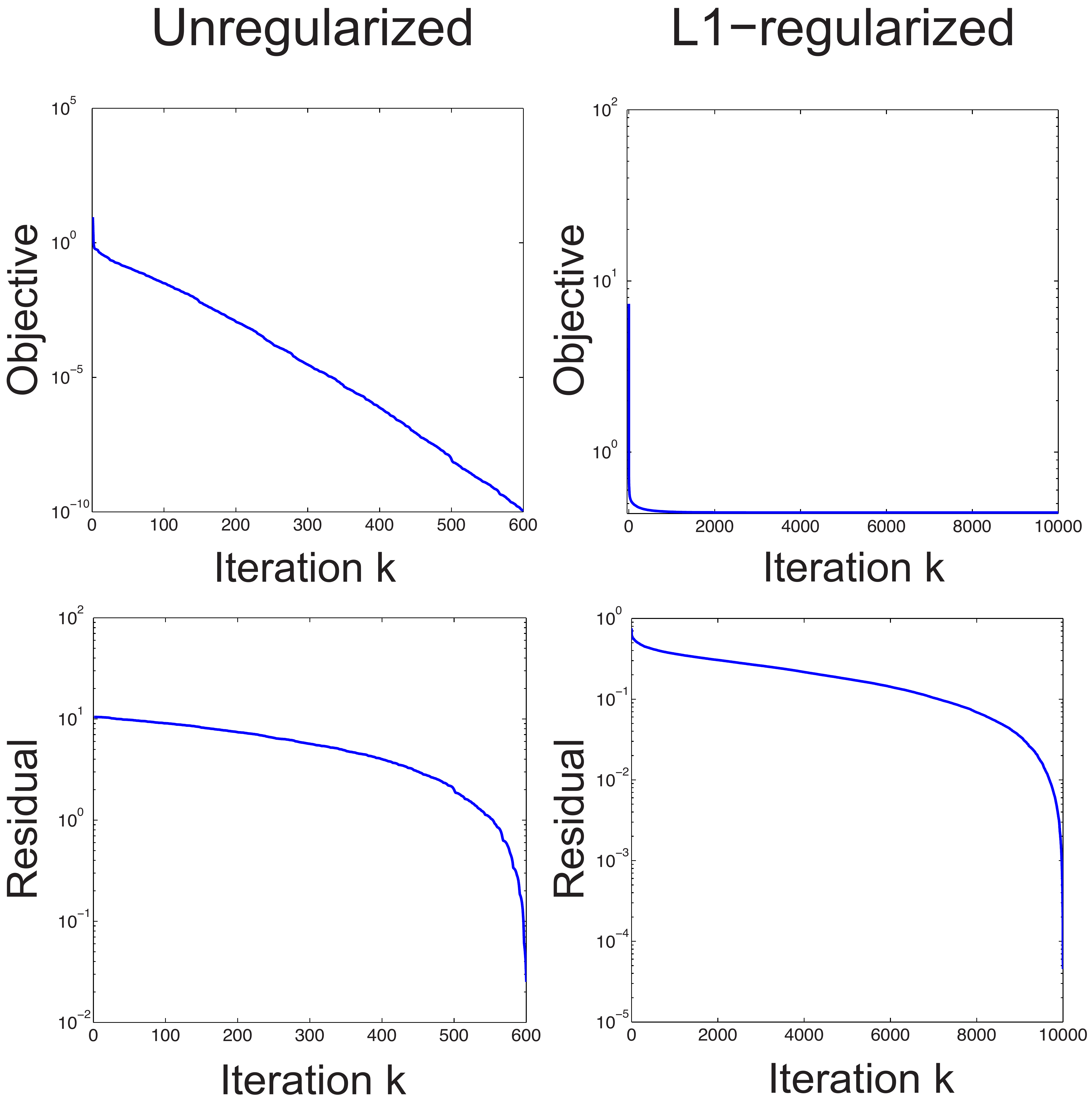}
\end{center}
\vskip -2ex
\caption{Convergence behavior for solving \eqref{eq:main} using Algorithm~\ref{alg:bcpd}. Top panel plots the objective function as a function of iteration.  Bottom panel plots the residual $\|\mbfw^k-\mbfw^*\|_F$ as a function of iteration.}
\label{fig:convg}
\end{figure}
\vskip -2ex

\section{Applications}

We apply sparse bilinear logistic regression to several real-world applications and compare its generalization performance with logistic regression, sparse logistic regression and bilinear logistic regression. We also extend the sparse bilinear logistic regression from the binary case to multi-class case in several experiments.

\subsection{Brain Computer Interface}

\subsubsection{Binary Case}

We tested the classification performance of sparse bilinear logistic regression \eqref{eq:main} on an EEG dataset with binary labels.  We used the EEG dataset IVb from from BCI competition III \footnote{\url{http://www.bbci.de/competition/iii/}} . Dataset IVb concerns a motor imagery classification task.  The 118 channel EEG was recorded from a healthy subject sitting in a comfortable chair with arms resting on armrests. Visual cues (letter presentation) were shown for 3.5 seconds, during which the subject performed: left hand, right foot, or tongue. The data was sampled at 100 Hz, and the cues of ``left hand'' and ``right foot'' were marked in the training data. We chose all the $210$ marked data points for test and downsampled each point to have 100 temporal slices, namely, $s=118$, $t=100$ in this test.

In \eqref{eq:main}, there are five parameters $\mu_1,\mu_2,\nu_1,\nu_2$ and $r$ to be tuned. Leave-one-out cross validation was performed on the training dataset to tune these data. First, we fixed $\mu_1=\mu_2=\nu_1=\nu_2=0$ (i.e., unregularized) and tuned $r$. Then, we fixed $r$ to the previously tuned one ($r=1$ in this test) and selected the best $(\mu_1,\mu_2,\nu_1,\nu_2)$ from a $6\times 5\times 6\times 5$ grid. 

\begin{table}[t]
\caption{Classification performance for the BCI EEG dataset.} 
\vskip 1ex
\begin{center}
\resizebox{0.65\columnwidth}{!}{\begin{tabular}{|c|c|}\hline
Models & Prediction Accuracy \\\hline
Logistic Regression & 0.75 \\
Sparse Logistic Regression & 0.76 \\
Bilinear Logistic Regression & 0.84 \\
Sparse Bilinear Logistic Regression & \textbf{0.89} \\
\hline
\end{tabular}}
\end{center}
\label{table:eeg-predict}
\end{table}

Table~\ref{table:eeg-predict} shows the prediction accuracy on the testing dataset.  We used the ROC analysis to compute the Az value (area under ROC curve) for both the unregularized model and the regularized model, where the best hyperparameters for the regularized model are tuned on the validation dataset using cross validation.  We compared (sparse) logistic regression with (sparse) bilinear logistic regression.  We solved the $\ell_1$-regularized logistic regression using FISTA~\cite{beck2009}.  We observed that bilinear logistic regression gives much better predictions than logistic regression. In addition, sparse bilinear logistic regression performs better than the unregularized bilinear logistic regression.

\subsubsection{Multi-class Case}

\begin{table}[h]
\caption{Classification performance for the multi-class EEG dataset.}
\begin{center}
\vskip 1ex
\resizebox{0.65\columnwidth}{!}{\begin{tabular}{|c|c|}\hline
Models & Prediction Accuracy \\\hline
Logistic Regression & 0.54 \\
Sparse Logistic Regression & 0.54 \\
Bilinear Logistic Regression & 0.55 \\
Sparse Bilinear Logistic Regression & \textbf{0.65} \\
\hline
\end{tabular}}
\end{center}
\label{table:multieeg-predict}
\end{table}

We further extended our sparse bilinear logistic regression to the multi-class case using one-versus-all method.  The EEG dataset in this experiment was based on a cognitive experiment where the subject view images of three categories and tried to make a decision about the category~\cite{lou2011}.  The data was recorded at 2048 Hz using a 64-channel EEG cap.  We downsampled this data to 100 Hz.  

Table~\ref{table:multieeg-predict} shows classification performance for the multi-class classification.  Consistently for all the three stimuli, bilinear logistic regression outperforms logistic regression, and sparse bilinear logistic regression further improves the generalization performance by introducing sparsity.

\subsection{Separating Style and Content}

As mentioned earlier, one benefit of the bilinear model is to separate style and content.  In order to exploit this property, we classified images with various camera viewpoints.  

\begin{figure}[h]
\begin{center}
\includegraphics[width=0.58\columnwidth]{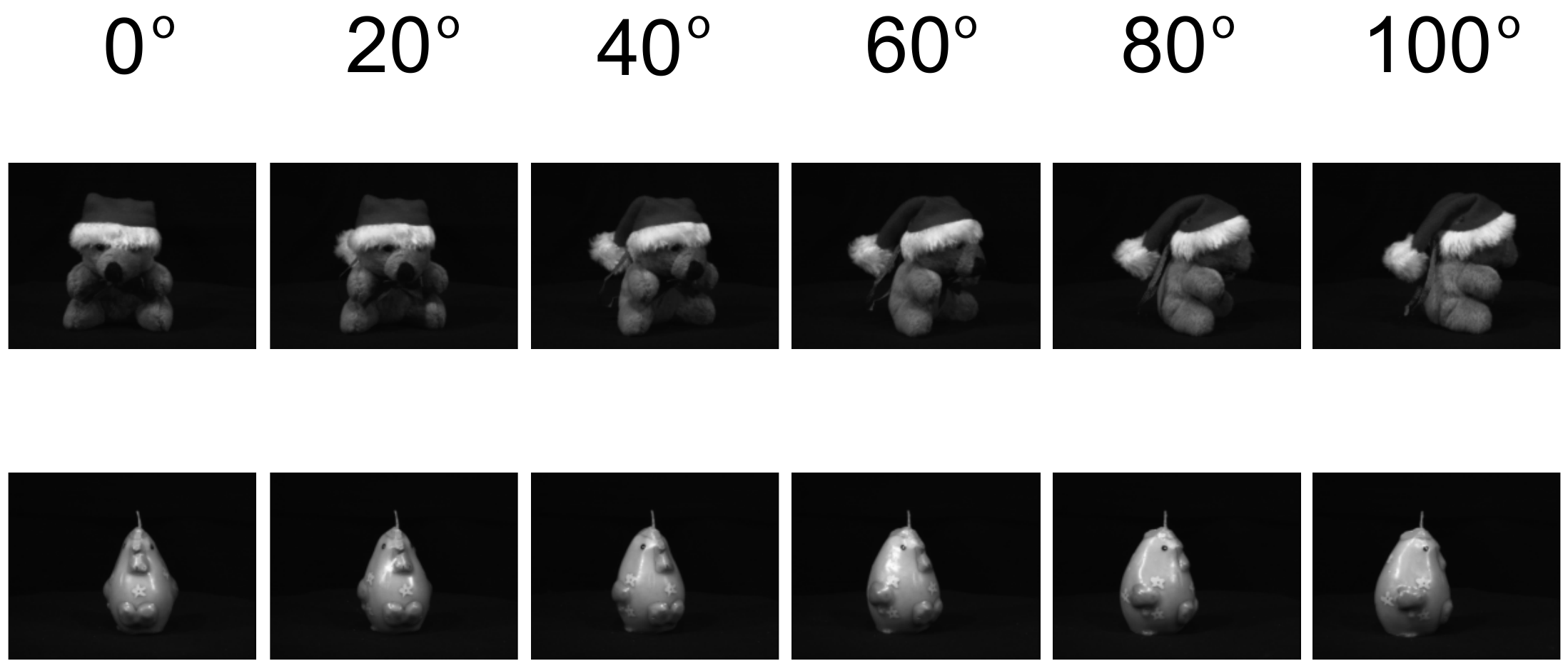}
\end{center}
\vskip -2ex
\caption{Sample images with various camera viewpoints.}
\label{fig:imgview}
\end{figure}

We used the Amsterdam Library of Object Images,\footnote{\url{http://staff.science.uva.nl/~aloi/}} where the frontal camera was used to record 72 viewpoints of the objects by rotating the object in the plane at $5^{\circ}$ resolution from $0^{\circ}$ to $355^{\circ}$.  Figure~\ref{fig:imgview} shows some sample images with various camera viewpoints.

\begin{table}[h]
\caption{Classification performance for images with various camera viewpoints.} 
\begin{center}
\vskip 1ex
\resizebox{0.65\columnwidth}{!}{\begin{tabular}{|c|c|}\hline
Models & Prediction Accuracy \\\hline
Logistic Regression & 0.86 \\
Sparse Logistic Regression & 0.86 \\
Bilinear Logistic Regression & 0.94 \\
Sparse Bilinear Logistic Regression & \textbf{1.00} \\
\hline
\end{tabular}}
\end{center}
\label{table:imgview}
\end{table}

Table~\ref{table:imgview} shows the comparison between (sparse) logistic regression and (sparse) bilinear logistic regression.  We observe a significant improvement using the bilinear model, and sparse bilinear logistic regression achieves the best generalization performance.

\subsection{Visual Recognition of Videos}

We used sparse bilinear logistic regression to videos~\cite{pirsiavash2009}, in the context of visual recognition for UCF sports action dataset.\footnote{\url{http://crcv.ucf.edu/data/UCF\_Sports\_Action.php}}  Since the size of the original video is big, we reduced the dimensionality of feature space by extracting histograms based on scale-invariant feature transform (SIFT) descriptors~\cite{lowe1999} for each frame.  

\begin{figure}[h]
\begin{center}
\includegraphics[width=0.58\columnwidth]{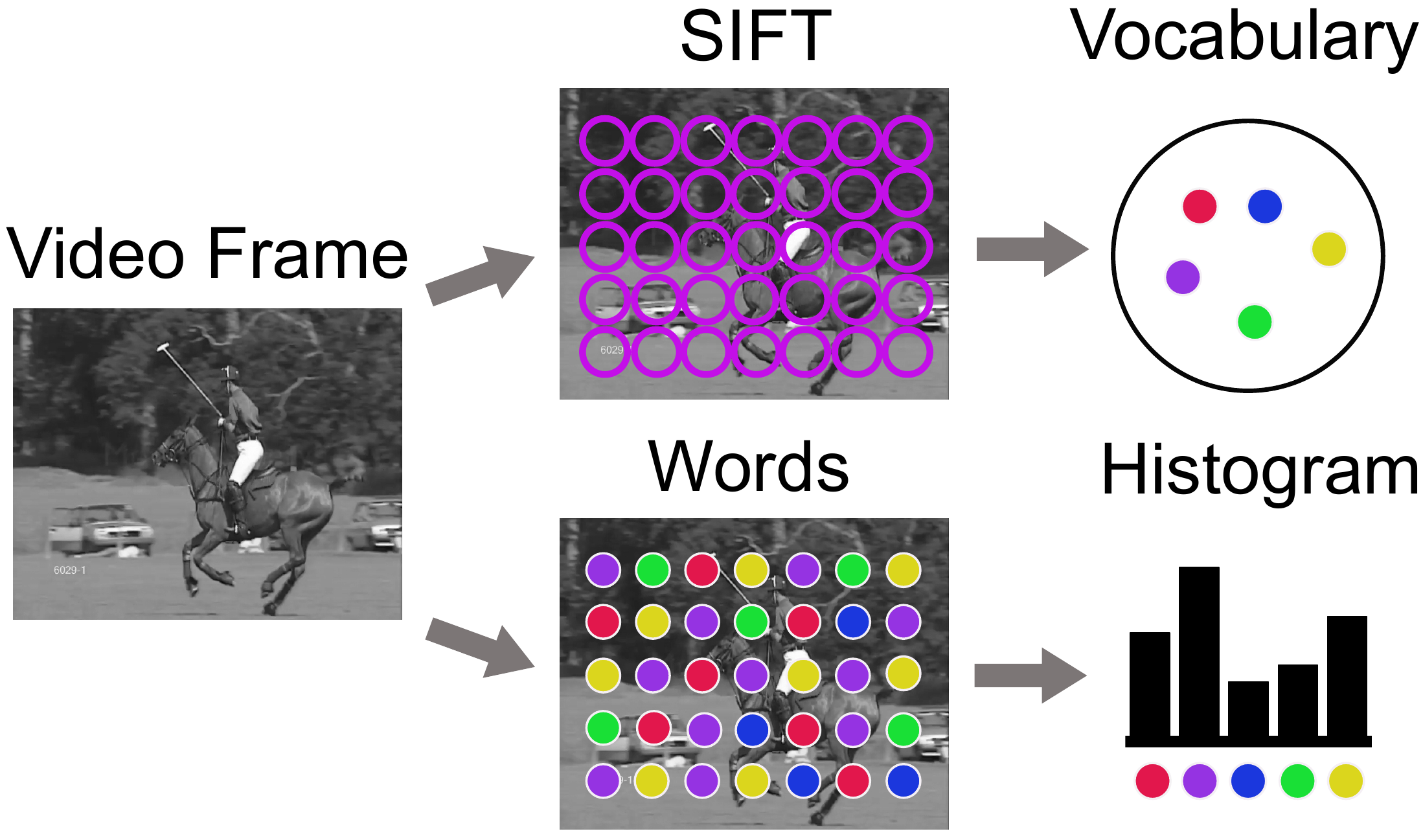}
\end{center}
\vskip -2ex
\caption{Illustration of building SIFT histogram features.}
\label{fig:sift}
\end{figure}

Figure~\ref{fig:sift} illustrates such a procedure.  We first built a vocabulary for the codebook assuming $100$ words, using k-mean clustering based on all the SIFT descriptors across frames for all the videos.  We then constructed histograms for each frame according to the codebook.  A tiling technique was used to improve the performance.  This procedure reduced the feature space to $s = 400$ and $t = 55$. 

We focused on five classes of sports action and we used the following abbreviations: Diving (Diving-Side), Riding (Riding-Horse), Run (Run-Side), Swing (Swing-Sideangle), Walk (Walk-Front).  We picked 6 videos out of each class, and used 6-fold cross validation to test discrimination accuracy in the context of transfer learning.  

\begin{table}[h]
\caption{Classification performance for the UCF sports action video dataset.}
\begin{center}
\vskip 1ex
\resizebox{0.65\columnwidth}{!}{\begin{tabular}{|c|c|}\hline
Models & Prediction Accuracy \\\hline
Logistic Regression & 0.70 \\
Sparse Logistic Regression & 0.70 \\
Bilinear Logistic Regression & 0.73 \\
Sparse Bilinear Logistic Regression & \textbf{0.77} \\
\hline
\end{tabular}}
\end{center}
\label{table:conf_mat_ucf}
\end{table}

Table~\ref{table:conf_mat_ucf} shows the classification performance for (sparse) logistic regression and (sparse) bilinear logistic regression. In overall, sparse bilinear logistic regression achieves the best classification performance.

\section{Discussion}

We proposed sparse bilinear logistic regression, and developed an efficient numerical algorithm using the block coordinate proximal descent method.  Theoretical analysis revealed its global convergence as well as convergence rate.  We demonstrated its generalization performance on several real-world applications.

\subsection{Dimensionality Reduction and Classification}

It should be noted that bilinear logistic regression performs dimensionality reduction and classification within the same framework.  Traditionally in order to combat the curse of dimensionality, dimension reduction techniques such as principle component analysis (PCA) and independent component analysis (ICA) were commonly used as a preprocessing step before carrying out classification.  Instead of a two-step processing, bilinear logistic regression carries out dimension reduction and classification using one optimization problem.  

Sparse bilinear logistic regression further fuses the benefits of sparse logistic regression and bilinear logistic regression into the same framework.  Sparsity overcomes the ambiguity intrinsic to the bilinear model, which is critical to the quality of solution.  Sparsity leads to feature selection in both spatial and temporal domains.  More importantly, sparsity improves the generalization performance of the classifier, which is intimately related to the logarithmic sample complexity~\cite{ng2004}.  We demonstrated such an improvement using a range of numerical experiments.  However, it remains a challenging problem to carry out rigorous statistical analysis based on the minimax theory due to the bi-convex nature.

\subsection{Bi-convexity: More Gain than Pain}

Bilinear model introduces bi-convexity into the objective function, however, it should be noted that the resulting decision boundary is still linear.  Recall the objective function for sparse bilinear logistic regression is the following
\begin{equation*}
\min_{\mbfu,\mbfv,b} \frac{1}{n}\sum_{i=1}^n\log \left( 1+\exp[-y_i (\tr(\mbfu^\top\mbfx_i\mbfv)+b)] \right)+r_1(\mbfu)+r_2(\mbfv).
\end{equation*}
The estimated spatial factor $\mbfu \in \mathbb{R}^{s \times r}$ and temporal factor $\mbfv \in \mathbb{R}^{r \times t}$ essentially forms a low-rank weight matrix $\mbfw \in \mathbb{R}^{s \times t}$, $\mbfw = \mbfu \mbfv^{\top}$.  Hence the decision boundary can be written as
\begin{equation*}
\langle \mathrm{diag}(\mbfw \otimes \mathbf{1}), x \rangle + b = 0.
\end{equation*}

With such an interpretation, one can also reformulate the objective function of the sparse bilinear logistic regression as
\begin{equation*}
\min_{\mbfw,\mbfu,\mbfv,b} \frac{1}{n}\sum_{i=1}^n\log \left( 1+\exp[-y_i (\tr( \mbfw \otimes \mbfx_i)+b)] \right)+\lambda\|\mbfw\|_*+r_1(\mbfu)+r_2(\mbfv), \quad \mbfw = \mbfu \mbfv^{\top}.
\end{equation*}
We see that the bilinear logistic regression has an equivalent convex formulation by minimizing the nuclear norm of $\mbfw$.  However, due to the benefits of sparsity (as discussed in section 2.3.2), it becomes critical to have a bilinear factorization and impose sparsity promoting priors on the spatial and temporal factors. 

As much as the difficulty of statistical analysis posed by bi-convexity, our numerical algorithm for solving sparse bilinear logistic regression is extremely efficient and has a guarantee for global convergence.  As we demonstrated empirically on a variety of classification tasks, sparse bilinear logistic regression provides an avenue to boost generalization performance.

\subsection{Multinomial Generalization}

The binomial sparse bilinear logistic regression can be further generalized to the multinomial case.  We assume each sample $\{\bfx_i\}$ to belong to $(m+1)$ classes and label $y_i\in\{1,2,\cdots,m+1\}$ and seek $(m+1)$ hyperplanes $\{\bfx: \bfw_c^\top\bfx+b_c=0\}_{c=1}^{m+1}$ to separate these samples.  According to the logistic model, the conditional probability for $y_i$ based on sample $\bfx_i$ is
\begin{equation}\label{eq:multi-pr}
P(y_i=c|\bfx_i,\bfw,\bfb)=\frac{\exp[\bfw_c^\top\bfx_i+b_c]}{\sum_{j=1}^{m+1}\exp[\bfw_j^\top\bfx_i+b_j]},\quad c = 1,\cdots,m+1.
\end{equation}
Because of the normalization condition $\sum_{c=1}^{m+1}P(y_i=c|\bfx_i,\bfw,\bfb)=1$, one $(\bfw_c,b_c)$ needs not be estimated. Without loss of generality, we set $(\bfw_{m+1},b_{m+1})$ to \emph{zero}. 
Let $y_{ic}=1$ if $y_i=c$ and $y_{ic}=0$ otherwise. Then \eqref{eq:multi-pr} becomes
\begin{equation}
P(y_i|\bfx_i,\bfw,\bfb)=\frac{\exp[\sum_{c=1}^my_{ic}(\bfw_c^\top\bfx_i+b_c)]}{1+\sum_{c=1}^{m}\exp[\bfw_c^\top\bfx_i+b_c]}.
\end{equation}

The average negative log-likelihood function is 
\begin{eqnarray*}
\label{eq:multi-ml}
\mcl(\bfw,\bfb) &=& -\frac{1}{n}\sum_{i=1}^n\log P(y_i|\bfx_i,\bfw,\bfb)\nonumber\\
&=& \frac{1}{n}\sum_{i=1}^n\left(\log\big(1+\sum_{c=1}^{m}\exp[\bfw_c^\top\bfx_i+b_c]\big)
-\sum_{c=1}^my_{ic}(\bfw_c^\top\bfx_i+b_c)\right)
\end{eqnarray*}
To perform MLE for $(\bfw,\bfb)$, one can minimize $\mcl(\bfw,\bfb)$. Under the above setting, where each sample is a matrix and each weight $\bfw_c$ has the form of $\mbfu_c\mbfv_c^\top$, the loss function becomes
\begin{equation*}\label{eq:multi-bml}
\mcl(\bmcu,\bmcv,\bfb)=\frac{1}{n}\sum_{i=1}^n\left(\log\big(1+\sum_{c=1}^{m}\exp[\tr(\mbfu_c^\top\mbfx_i\mbfv_c)+b_c]\big)
-\sum_{c=1}^my_{ic}(\tr(\mbfu_c^\top\mbfx_i\mbfv_c)+b_c)\right).
\end{equation*}
The multinomial sparse bilinear logistic regression takes the following variational formulation
\begin{equation}\label{eq:multi-reg}
\min_{\bmcu,\bmcv,\bfb}\mcl(\bmcu,\bmcv,\bfb)+
R_1(\bmcu)+R_2(\bmcv),
\end{equation}
where $\bmcu=(\mbfu_1,\cdots,\mbfu_m), \bmcv=(\mbfv_1,\cdots,\mbfv_m)$ with $\mbfu_c\in\mbr^{S\times K}$ and $\mbfv_c\in\mbr^{T\times K}$ for each class $c$, and $R_1$ and $R_2$ are used to promote priori structures on $\bmcu$ and $\bmcv$, respectively.

\bibliographystyle{plain}
\bibliography{BLRref}

\end{document}